\newcommand{\ZZ}{\mathbb{Z}}
\newcommand{\standout}[1]{\textbf{#1}}    % style for terms defined in text
\tikzset{blueedge/.style={blue,ultra thick},greenedge/.style={green!65!black,ultra thick,densely dotted},pinkedge/.style={magenta,ultra thick,dashed}}
\theoremstyle{plain}
\newtheorem{theorem}{Theorem}[section]
\newtheorem{proposition}[theorem]{Proposition}
\newtheorem{lemma}[theorem]{Lemma} 
\newtheorem{corollary}[theorem]{Corollary}
\theoremstyle{definition}
\newtheorem{example}[theorem]{Example}
\newtheorem{remark}[theorem]{Remark}
\newtheorem{defn}[theorem]{Definition}
\newtheorem{construction}[theorem]{Construction}  
\begin{document}

%%%%%%%%%%%%%%%%%%%%%%%%%%%%%%%%%%%%%%%%%%%%%%%%%%%%%%%%%%%%%%%%%%
% METADATA
\title[The Middle Cone]{Metric Dimension of a Direct Product of Three Complete Graphs: The Middle Cone Family}
\author{Briana Foster-Greenwood}
\address{Department of Mathematics and Statistics, California State Polytechnic University, Pomona, CA 91768}
\email{brianaf@cpp.edu}
\author{Christine Uhl}
\address{Department of Mathematics, St.\ Bonaventure University,
St\ Bonaventure, NY 14778}
\email{cuhl@sbu.edu}
\date{\today}
\subjclass[2020]{05C69 (Primary) 05C12, 05B30, 05C15 (Secondary)}
% 05C12 Distance in graphs
% 05C15 Coloring of graphs and hypergraphs
% 05C25 Graphs and abstract algebra (groups, rings, fields, etc.)
% 05B30 Other designs, configurations
% 05B05 Combinatorial aspects of block designs
% 05B15 Orthogonal arrays, Latin squares, Room squares
% 05C65 Hypergraphs
% 05C69 Vertex subsets with special properties (dominating sets, independent sets, cliques, etc.)
\keywords{metric dimension, resolving set, direct product graph, dominating set}
\begin{abstract}
In previous work, we determined the metric dimension for a direct product of three isomorphic complete graphs. Turning to the case where the complete graphs may have different orders, there are three families we refer to as the upper, lower, and middle cones.
We determine the metric dimension and location-total-domination number for a family of direct products of three complete graphs 
stemming from the middle cone.
We explicitly describe minimum resolving sets. 
To verify the sets are resolving, we define a basic landmark system and show it will be a resolving set if and only if its associated 3-edge-colored hypergraph avoids three types of forbidden subgraphs. This generalizes the technique used for three isomorphic factors.
\end{abstract}

\maketitle
%%%%%%%%%%%%%%%%%%%%%%%%%%%%%%%%%%%%%%%%%%%%%%%%%%%%%%%%%%%%%%

\section{Introduction}
Let $G$ be a finite connected graph with vertex set $V$. For vertices $x,y\in V$, define the distance $d(x,y)$ to be the length of a shortest path between $x$ and $y$ in $G$. Given a subset of vertices $W\subseteq V$, whose elements are referred to as \standout{landmarks}, we say $W$ is a \standout{resolving set} (or $W$ \standout{resolves} $G$) provided that for every pair of distinct vertices $x,y\in V-W$, there exists a landmark $w\in W$ such that $d(x,w)\neq d(y,w)$. A \standout{minimum resolving set} for $G$ is a resolving set of minimum size. The \standout{metric dimension} of $G$, denoted $\dim(G)$, is the size of a minimum resolving set.

Metric dimension has applications to GPS, chemical structures, gene sequencing, pattern recognition, and robot navigation (see the surveys \cite{Tillquist2021} and  \cite{Kuziak2021}).
Alhough the decision variant of the problem of finding metric dimension is NP-complete, advances have been made for families of graphs, including various types of product graphs. For example, the metric dimension is known for Cartesian products of two complete graphs \cite{Caceres2007}, three isomorphic complete graphs \cite{DrewesJager2020}, and three arbitrary complete graphs \cite{GledelJager2024}. Bounds and asymptotic results are known for Hamming graphs $K_a^{\square k}$ (a Cartesian product of $k$ isomorphic complete graphs), which have connections to coin-weighing and the game Mastermind (see the survey \cite{Tillquist2021} for discussion and references). Metric dimension of lexicographic product graphs is studied in \cite{Jannesari2012}.

Due to their connection to unitary Cayley graphs \cite[Theorem 3.2]{Sander2010}, we focus on direct products of complete graphs.
Given two graphs $G$ and $H$, the \standout{direct product graph} $G\times H$ has vertex set $V(G\times H)=V(G)\times V(H)$ and component-wise adjacency. That is, $(g_1,h_1)$ is adjacent to $(g_2,h_2)$ in $G\times H$ if and only if $g_1$ is adjacent to $g_2$ in $G$ and $h_1$ is adjacent to $h_2$ in $H$. 

Results on metric dimension of direct products with two factors include when the factors are cycles or paths \cite{Vetrik2017} or complete graphs \cite{Kuziak2017}. Considering three factors, in \cite[Theorem 17]{FGUhl}, we found for $n\geq 5$, the metric dimension of a direct product of three isomorphic complete graphs is
\[
\dim(K_n \times K_n \times K_n) = 2n-1. 
\]
In this paper, we continue this investigation but allow for the factors to be nonisomorphic.

For a direct product of three complete graphs $K_{n_1}\times K_{n_2}\times K_{n_3}$, we identify the vertex set of $K_{n_i}$ with the set $\{1,2,\ldots,n_i\}$ so that the vertex set of  $K_{n_1}\times K_{n_2}\times K_{n_3}$ is $\{(x_1,x_2,x_3):1\leq x_i\leq n_i\text{ for $i=1,2,3$}\}$. Vertices $(x_1,x_2,x_3)$ and  $(y_1,y_2,y_3)$ are adjacent if and only if $x_i\neq y_i$ for all $i\in\{1,2,3\}$. For notational convenience, we write $\mathbf{n}=(n_1,n_2,n_3)$ and let $K(\mathbf{n})=K_{n_1}\times K_{n_2}\times K_{n_3}$. At times we will increment or decrement the parameter by the all ones vector $\mathbf{1}=(1,1,1)$.

% introduce cones
Let $\mathcal{N}=\{(n_1,n_2,n_3)\in\mathbb{N}^3:3\leq n_1,n_2\leq n_3\}$.
To organize our work, we partition the parameter space $\mathcal{N}$ into ``cones''.
 Define the \standout{lower cone} to be the set of all $\mathbf{n}\in \mathcal{N}$ such that $2n_3<3\max(n_1,n_2)$; the \standout{middle cone} to be the set of all $\mathbf{n}\in \mathcal{N}$ such that $3\max(n_1,n_2)\leq 2n_3\leq n_1n_2$; and the \standout{upper cone} to be the set of all $\mathbf{n}\in \mathcal{N}$ such that $2n_3>n_1n_2$.

For $\mathbf{n}\in\mathcal{N}$, the restriction $n_i\geq 3$ guarantees $K(\mathbf{n})$ is a connected graph of diameter two.
Indeed, for any pair of vertices $\alpha=(a_1,a_2,a_3)$ and $\beta=(b_1,b_2,b_3)$, there is a common neighbor $\gamma=(c_1,c_2,c_3)$ with $c_i\notin\{a_i,b_i\}$ for each $i$. Thus,
\[
d(\alpha,\beta)=\begin{cases}
    0 & \text{if $a_i=b_i$ for all $i$} \\
    1 & \text{if $a_i\neq b_i$ for all $i$} \\
    2 & \text{if $a_i=b_i$ for some (but not all) $i$.} 
\end{cases}
\]

By \cite[Theorem 2]{FGUhl}, we know that the metric dimension of $K(\mathbf{n})$ is at least $2n_3-1$. Our main conclusion of the current paper is that we get equality whenever $\mathbf{n-1}$ is in the middle cone.
This also tells us the adjacency dimension since metric dimension equals the adjacency dimension for graphs of diameter two \cite{Jannesari2012}.

The remainder of the paper is organized as follows.
In \cref{sec:hypandforbidden}, we recall from \cite{FGUhl} how to represent a set of landmarks by an edge-colored hypergraph and prove an equivalent characterization of resolving sets in terms of landmark graphs. This extends \cite[Theorem 9]{FGUhl} to the case where the landmark graph has hyperedges containing more than two vertices. In \cref{sec:constructions}, we describe a concrete construction of resolving sets.
In \cref{sec:proofs}, we prove our main result \cref{mdimthm} by showing the sets from our construction extend to minimum resolving sets whose landmark graphs avoid the forbidden configurations identified in \cref{thm:poofyloopyverboten}. 
Finally, in \cref{sec:conclusion}, we comment on the upper and lower cones and propose some open problems. 

%%%%%%%%%%%%%%%%%%%%%%%%%%%%%%%%%%%%%%%%%%%%%%%%%%%%%%%
%%%%%%%%%%%%%%%%%%%%%%%%%%%%%%%%%%%%%%%%%%%%%%%%%%%%%%%
\section{Landmark Graphs and Forbidden Configurations}\label{sec:hypandforbidden}

As in \cite{FGUhl}, we convert the problem of determining whether a set of landmarks resolves $K(\mathbf{n})$ into a problem of determining whether a related edge-colored hypergraph avoids forbidden subgraphs.
This section parallels \cite[Section 3]{FGUhl}, generalizing to allow for landmark graphs with hyperedges that have more than two vertices and identifying a new set of forbidden subgraphs accordingly. First, we establish some hypergraph terminology.

\subsection{Hypergraphs}

A \standout{hypergraph} consists of a nonempty set of vertices, say $W$, and a multiset of \standout{hyperedges} (\standout{edges} for brevity), each of which is a nonempty subset of $W$. We classify hyperedges by how many vertices they contain. In particular, a \standout{loop} is a hyperedge with exactly one vertex; a \standout{stick} is a hyperedge with exactly two vertices; and a \standout{poofy edge} is a hyperedge with at least three vertices. Our definition of hypergraph allows for multiple hyperedges with the same set of vertices. For instance, we may have a \standout{triple loop}, which is three loops with the same vertex.

\subsection{Landmark Graphs}
In order to determine if $W$, a subset of vertices of the graph $K(\mathbf{n})$, resolves the graph, we use the \textbf{landmark graph} $\mathcal{G}(W)$ as in \cite{FGUhl}.  Given $W$, let $W_{i,a}$ be the set of elements in $W$ whose $i$-th coordinate is $a$.  Then $\mathcal{G}(W)$ is the hypergraph with vertex set $W$ and hyperedges those $W_{i,a}$ that are nonempty.  
The landmark graph $\mathcal{G}(W)$ has a proper $3$-edge coloring by assigning color $i$ to the sets $W_{i,a}$.  
We refer to the first color as blue, second as green, and third as pink.  See \cref{fig:landmarkgraph346} for an example of a landmark graph (which represents a minimum resolving set of $K(4,5,7)$ by \cref{thm:poofyloopyverboten}).

% example of a landmark graph
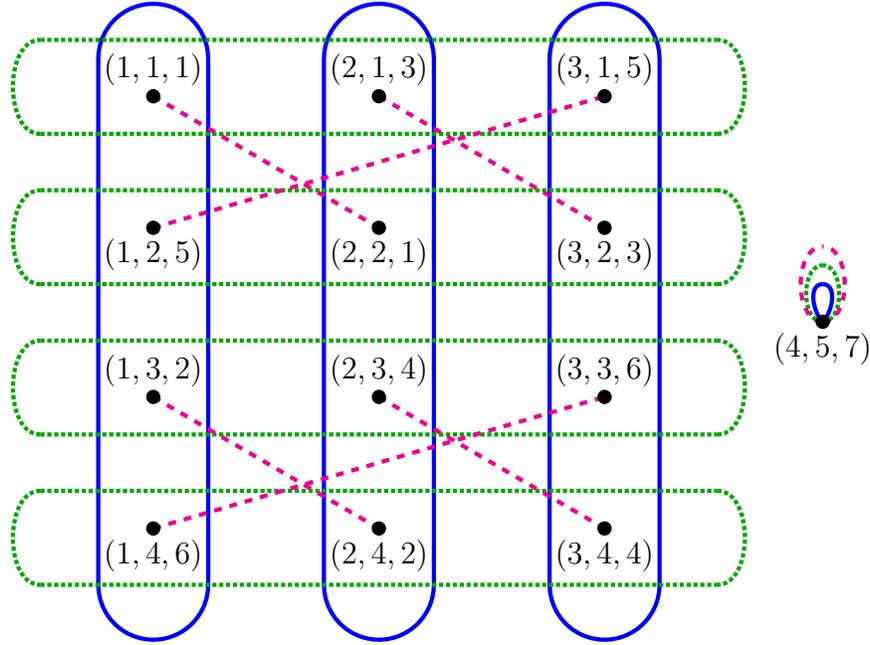
\begin{figure}
    \centering
\begin{tikzpicture}[scale=1]
    \draw[blueedge,double distance=40pt, line cap=round] (0,-2.5)--(0,4.5);
    \draw[blueedge,double distance=40pt, line cap=round] (6,-2.5)--(6,4.5);
    \draw[blueedge,double distance=40pt, line cap=round] (3.0,-2.5)--(3.0,4.5);
   
    \draw[greenedge] (-1.5,4.75)--(7.5,4.75);    \draw[greenedge] (-1.5,3.5)--(7.5,3.5);
    \draw[greenedge]   (-1.5,3.5) to[out=180,in=180] (-1.5,4.75);
    \draw[greenedge]   (7.5,3.5) to[out=0,in=0] (7.5,4.75);
    
    \draw[greenedge] (-1.5,2.75)--(7.5,2.75);
    \draw[greenedge] (-1.5,1.5)--(7.5,1.5);
    \draw[greenedge]   (-1.5,1.5) to[out=180,in=180] (-1.5,2.75);
    \draw[greenedge]   (7.5,1.5) to[out=0,in=0] (7.5,2.75);

     \draw[greenedge] (-1.5,.75)--(7.5,.75);
    \draw[greenedge] (-1.5,-.5)--(7.5,-.5);
    \draw[greenedge]   (-1.5,-.5) to[out=180,in=180] (-1.5,.75);
    \draw[greenedge]   (7.5,-.5) to[out=0,in=0] (7.5,.75);

    \draw[greenedge] (-1.5,-2.5)--(7.5,-2.5);
    \draw[greenedge] (-1.5,-1.25)--(7.5,-1.25);
    \draw[greenedge]   (-1.5,-1.25) to[out=180,in=180] (-1.5,-2.5);
    \draw[greenedge]   (7.5,-1.25) to[out=0,in=0] (7.5,-2.5);

    \draw[pinkedge] (0,4)--(3,2.25);
    \draw[pinkedge] (0,2.25)--(6,4);
    \draw[pinkedge] (0,0)--(3,-1.75);
    \draw[pinkedge] (0,-1.75)--(6,0);
    \draw[pinkedge] (3,4)--(6,2.25);
    \draw[pinkedge] (3,0)--(6,-1.75);
    
    \draw[fill=black] (0,4) circle (2.5pt) node[above] {$(1,1,1)$};
    \draw[fill=black] (0,2.25) circle (2.5pt) node[below] {$(1,2,5)$};
    \draw[fill=black] (0,0) circle (2.5pt) node[above] {$(1,3,2)$};
    \draw[fill=black] (0,-1.75) circle (2.5pt) node[below] {$(1,4,6)$};
      
    \draw[fill=black] (3.,4) circle (2.5pt) node[above] {$(2,1,3)$};
    \draw[fill=black] (3,2.25) circle (2.5pt) node[below] {$(2,2,1)$};
    \draw[fill=black] (3,0) circle (2.5pt) node[above] {$(2,3,4)$};
    \draw[fill=black] (3,-1.75) circle (2.5pt) node[below] {$(2,4,2)$};

    \draw[fill=black] (6,4) circle (2.5pt) node[above] {$(3,1,5)$};
    \draw[fill=black] (6,2.25) circle (2.5pt) node[below] {$(3,2,3)$};
    \draw[fill=black] (6,0) circle (2.5pt) node[above] {$(3,3,6)$};
    \draw[fill=black] (6,-1.75) circle (2.5pt) node[below] {$(3,4,4)$};

     \draw[blueedge] (8.9,1) to[out=135,in=180] (8.9,1.5);
    \draw[blueedge] (8.9,1.0) to[out=45,in=0] (8.9,1.5);
    \draw[greenedge] (8.9,1.0) to[out=180,in=180] (8.9,1.75);
    \draw[greenedge] (8.9,1.0) to[out=0,in=0] (8.9,1.75);
    \draw[pinkedge] (8.9,1.0) to[out=180,in=180] (8.9,2);
    \draw[pinkedge] (8.9,1) to[out=0,in=0] (8.9,2);
    \draw[fill=black] (8.9,1) circle (2.5pt) node[below] {$(4,5,7)$};
\end{tikzpicture}
    \caption{Landmark graph $\mathcal{G}(W)$ of a minimum resolving set for $K(4,5,7)$. Landmarks in the same blue (solid) hyperedge have the same first coordinate; landmarks connected by a green (dotted) edge have the same second coordinate; and landmarks connected by a pink (dashed) edge have the same third coordinate.}
    \label{fig:landmarkgraph346}
\end{figure}

In \cite[Definition 4]{FGUhl}, we defined a $2$-basic landmark system. Here, we generalize by allowing for hyperedges with more than two vertices.
\begin{defn}\label{def:basic}
    Let $\mathbf{n}=(n_1,n_2,n_3)\in\mathcal{N}$. For $W$ a subset of vertices of the graph $K(\mathbf{n})$, we say $W$ is a \standout{basic landmark system} provided that in the landmark graph $\mathcal{G}(W)$:
\begin{enumerate}
    \item there is a a full set of hyperedges in each color, i.e., for $1\leq i\leq 3$, there are $n_i$ hyperedges of color $i$; %(to ensure $W_{i,a}$ nonempty)
    \item  each hyperedge contains at least two vertices; and
    \item hyperedges of different colors intersect in at most one vertex.
\end{enumerate}
Letting $u=(n_1+1,n_2+1,n_3+1)$, we extend a basic landmark system for $K(\mathbf{n})$ to a \standout{triple-looped landmark system} $W\cup\{u\}$ for $K(\mathbf{n}+\mathbf{1})$. 
\end{defn}

\begin{remark}\label{rem:triple-looped-properties}
   Observe that the landmark graph $\mathcal{G}(W\cup\{u\})$ has the hyperedges from $\mathcal{G}(W)$ as well as a new triple-loop, so
   we continue to use the notation $W_{i,a}$,
   extending to include the loops $W_{1,n_1+1}=W_{2,n_2+1}=W_{3,n_3+1}=\{u\}$. 
   The landmark graph of $W\cup\{u\}$ still satisfies properties (1) and (3) of the definition of a basic landmark system but does not satisfy property (2) due to the presence of loops.
\end{remark}

\subsection{Footprints}We now recall from \cite{FGUhl} the definition of footprints and how to use them to determine if a basic landmark system is a resolving set.
Given a basic landmark system $W$ and vertex $\alpha = (a_1,a_2,a_3)$ of $K(\mathbf{n})$, the \textbf{footprint of $\alpha$} (relative to $W$) is the subgraph of $\mathcal{G}(W)$ induced by the set of edges $\{W_{1,a_1},W_{2,a_2},W_{3,a_3}\}$. The elements of $W_{1,a_1}\cup W_{2,a_2}\cup W_{a_3}$ are the vertices \standout{covered} by the footprint of $\alpha$. By the first condition of a basic landmark system, the footprint of $\alpha$ consists of exactly three hyperedges (one of each color).  If $\alpha \in W$, the edges in the footprint of $\alpha$ intersect in a common vertex.  (If $W$ is not a basic landmark system, we can still define footprints, but a footprint may have fewer than three edges, or possibly be empty.)

By \cite[Remark 6]{FGUhl}, distinct vertices $\alpha$ and $\beta$ not in $W$ are resolved if and only if their footprints cover different sets of vertices. Indeed, for $w\in W$,  if $w$ is in the footprint of $\alpha$ but not in the footprint of $\beta$, then $d(\alpha,w)=2$ and $d(\beta,w)=1$; and if $w$ is in both footprints or neither, then $d(\alpha,w)=d(\beta,w)$.

\subsection{Forbidden configurations for basic landmark systems}
There are three forbidden subgraphs such that if a landmark graph contains them, we immediately know that the basic landmark system is not resolving.  They are the bad $4$-cycle, plain hex, and shark teeth as defined below.  See \cref{tab:forbiddenfootprints} for a visual representation.

\begin{defn}
    A \standout{bad $4$-cycle} is a tri-colored $4$-cycle in which the opposite edges of the same color are sticks. (The remaining edges may be sticks or poofy.) A \standout{plain hex} is a $6$-cycle in which every edge is a stick and the edge colors follow a pattern blue-green-pink-blue-green-pink. A \standout{rainbow 2-2-triangle} is a path of two sticks of different colors where the initial and final vertices (``termini'') of the path belong to the same (possibly poofy) edge of a third color. \standout{Shark teeth} are two vertex disjoint rainbow $2$-$2$-triangles whose termini belong to the same poofy edge.
\end{defn}

We begin by showing that if a landmark graph contains one of the three forbidden subgraphs, then the corresponding set of landmarks is not resolving.

\begin{lemma}\label{lem:BADstuff}
     Let $\mathbf{n}\in\mathcal{N}$ and let $W$ be a subset of vertices of the graph $K(\mathbf{n})$. If the landmark graph $\mathcal{G}(W)$ contains a bad $4$-cycle, plain hex, or shark teeth, then $W$ does not resolve the graph $K(\mathbf{n})$. 
\end{lemma}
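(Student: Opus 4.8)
The plan is to show that each of the three forbidden configurations produces two distinct non-landmark vertices $\alpha,\beta\notin W$ whose footprints cover the same set of vertices; by the footprint criterion recalled just above (from \cite[Remark 6]{FGUhl}), such a pair is unresolved, so $W$ fails to resolve $K(\mathbf{n})$. The key technical move in every case is to read off, from the cyclic/path structure of the configuration, a pair of vertices of $K(\mathbf{n})$ that select exactly the same three colored hyperedges (up to covering the same vertex set) in the landmark graph. I would treat the three cases separately, since the combinatorics differs, but each reduces to the same endgame.

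\textbf{Bad $4$-cycle.} Suppose the landmark graph contains a tri-colored $4$-cycle whose two same-color opposite edges are sticks; say the four vertices in cyclic order are $p,q,r,s$, the blue edges are the two opposite sticks, and the green and pink edges are the remaining (possibly poofy) edges. First I would extract the coordinates: because $p,q$ lie on a common blue stick they share a first coordinate, and similarly around the cycle. From these shared-coordinate relations I would define two candidate vertices $\alpha,\beta$ of $K(\mathbf{n})$ whose $i$-th coordinates are chosen so that $W_{i,a_i}$ and $W_{i,b_i}$ together pick out the two opposite edges of color $i$ along the cycle; concretely $\alpha$ and $\beta$ are obtained by reading the two ``diagonal'' ways of pairing the cycle vertices. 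The crucial point is that the two blue edges are \emph{sticks} (exactly two vertices each), which forces the blue contribution to the two footprints to cover the same vertex set; the green and pink edges of the footprints literally coincide, so the total covered sets agree. I would then check $\alpha\neq\beta$ (they differ in a coordinate because the cycle is a genuine $4$-cycle, not degenerate) and that $\alpha,\beta\notin W$.

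\textbf{Plain hex.} With all six edges sticks in the blue-green-pink-blue-green-pink pattern, I would label the six cycle vertices and note that the three color classes each contribute two disjoint sticks. Pairing alternate vertices gives two vertices $\alpha,\beta$ of $K(\mathbf{n})$; I would verify that the footprint of $\alpha$ consists of one stick of each color covering one set of three cycle vertices, the footprint of $\beta$ covers the complementary three, and—because every edge is a stick—the union of covered vertices is the same six-vertex set in both cases. The all-sticks hypothesis is exactly what makes the two footprints cover identical vertex sets rather than merely overlapping ones. Again I would confirm $\alpha\neq\beta$ and that neither lies in $W$.

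\textbf{Shark teeth.} For the two vertex-disjoint rainbow $2$-$2$-triangles sharing a common poofy edge of, say, pink color, I would name the termini of the two triangles and use the two sticks of each triangle to determine the blue and green coordinates of a pair $\alpha,\beta$, while the shared pink poofy edge supplies the common third coordinate. The footprints of $\alpha$ and $\beta$ then share the same pink poofy edge and have blue/green sticks drawn from the two triangles; the vertex-disjointness of the triangles guarantees $\alpha\neq\beta$, and matching up the stick endpoints shows the two footprints cover the same set of vertices. The main obstacle I anticipate is the bookkeeping in this last case: unlike the $4$-cycle and hex, here one color-edge is genuinely poofy, so I must argue carefully that the \emph{extra} vertices of the poofy edge are covered by \emph{both} footprints (which they are, since both $\alpha$ and $\beta$ use that same pink edge), and that no additional vertex sneaks into one footprint but not the other. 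Once the covered-set equality is established in each case, the conclusion follows immediately from the footprint characterization of resolution.
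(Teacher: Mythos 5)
Your overall strategy is exactly the paper's (and that of the earlier lemmas it cites): for each forbidden configuration, produce two distinct non-landmark vertices $\alpha,\beta$ whose footprints cover the same vertex set. The bad $4$-cycle and plain hex sketches are essentially correct, though loosely worded: in the $4$-cycle the two blue ``contributions'' $\{p,q\}$ and $\{r,s\}$ are \emph{not} the same set --- the point is rather that each blue stick is absorbed into the union of the green and pink edges, which the two footprints share; and in the hex each footprint covers all six cycle vertices (three pairwise disjoint sticks), not three, although your final conclusion there is the right one.

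The shark-teeth case, however, has a genuine gap: the proof stands or falls on \emph{how} the four sticks are distributed between $\alpha$ and $\beta$, and your description (``use the two sticks of each triangle to determine the blue and green coordinates of a pair $\alpha,\beta$'') most naturally reads as giving $\alpha$ both sticks of the first triangle and $\beta$ both sticks of the second. That assignment fails. Writing the triangles as paths $u_1\,u_2\,u_3$ and $v_1\,v_2\,v_3$ with termini $u_1,u_3,v_1,v_3$ in the shared poofy edge $P$, the per-triangle choice gives footprints covering $P\cup\{u_2\}$ and $P\cup\{v_2\}$, which are \emph{different} sets since $u_2\neq v_2$ by vertex-disjointness (and $u_2,v_2$ need not lie in $P$). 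You must instead \emph{cross} the triangles: $\alpha$ takes the blue stick $\{u_1,u_2\}$ of the first triangle and the green stick $\{v_2,v_3\}$ of the second (plus $P$), while $\beta$ takes $\{v_1,v_2\}$ and $\{u_2,u_3\}$ (plus $P$); then both footprints cover $P\cup\{u_2,v_2\}$, and $\alpha\neq\beta$, $\alpha,\beta\notin W$ follow as you indicate. ``Matching up the stick endpoints'' does not substitute for making this choice explicit, since the other natural choice makes the claimed equality false.
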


\begin{proof} We consider each forbidden configuration. For concreteness, we specify the colors of the edges, but these may be permuted.

    \textit{Bad $4$-cycle.} The proof of \cite[Lemma 7]{FGUhl} still holds with allowing the opposite edges of two different colors to be poofy.  See also \cref{tab:forbiddenfootprints} for a proof by picture.
    
     \textit{Plain hex.} See the proof of \cite[Lemma 8]{FGUhl}. 

    \textit{Shark teeth.}  Suppose the graph of $W$ has shark teeth (illustrated in \cref{tab:forbiddenfootprints}) with green sticks $W_{2,a_2}=\{u_1,u_2\}$ and $W_{2,b_2}=\{v_1,v_2\}$; pink sticks $W_{3,b_3}=\{u_2,u_3\}$ and $W_{3,a_3}=\{v_2,v_3\}$; and blue edge $W_{1,a_1}$ containing $\{u_1,u_3,v_1,v_3\}$. Then
    \[W_{1,a_1}\cup W_{2,a_2}\cup W_{3,a_3}=W_{1,a_1}\cup W_{2,b_2}\cup W_{3,b_3},\]
    so the footprints of $\alpha=(a_1,a_2,a_3)$ and $\beta=(a_1,b_2,b_3)$ cover the same set of vertices. Moreover, $\alpha$ and $\beta$ are not landmarks. Hence $\alpha$ and $\beta$ are not resolved. 
\end{proof}

\begin{table}
       \centering
        {\renewcommand{\arraystretch}{1.4}\begin{tabular}{|c|c|c|}
        \hline
        \textbf{Forbidden Subgraph }& \textbf{Footprint 1} & \textbf{Footprint 2}\\
         \hline
    \hline
    bad $4$-cycle  &&\\ %(poofy)
       \begin{tikzpicture}[scale=1.75]
     \draw[blueedge,double distance=15pt,line cap=round] (-.25,1)--(1.25,1);
    \draw[greenedge] (-.35,.15)--(1.35,0.15);
    \draw[greenedge] (-.35,-.15)--(1.35,-0.15);
    \draw[greenedge] (-.35,.15) to[out=180,in=180] (-.35,-.15);
    \draw[greenedge] (1.35,0.15)  to[out=0,in=0] (1.35,-0.15);
    \draw[pinkedge] (0,1)--(0,0);
    \draw[pinkedge] (1,0)--(1,1);
     \draw[fill=white] (0,0) circle (2pt);
     \draw[fill=white] (0,1) circle (2pt);
     \draw[fill=white] (1,1) circle (2pt);
     \draw[fill=white] (1,0) circle (2pt);
\end{tikzpicture}& \begin{tikzpicture}[scale=1.75]
     \draw[blueedge,double distance=15pt,line cap=round] (-.25,1)--(1.25,1);
       \draw[greenedge] (-.35,.15)--(1.35,0.15);
    \draw[greenedge] (-.35,-.15)--(1.35,-0.15);
    \draw[greenedge] (-.35,.15) to[out=180,in=180] (-.35,-.15);
    \draw[greenedge] (1.35,0.15)  to[out=0,in=0] (1.35,-0.15);
    \draw[pinkedge] (0,1)--(0,0);
     \draw[fill=white] (0,0) circle (2pt);
     \draw[fill=white] (0,1) circle (2pt);
     \draw[fill=white] (1,1) circle (2pt);
     \draw[fill=white] (1,0) circle (2pt);
\end{tikzpicture} &\begin{tikzpicture}[scale=1.75]
     \draw[blueedge,double distance=15pt,line cap=round] (-.25,1)--(1.25,1);
       \draw[greenedge] (-.35,.15)--(1.35,0.15);
    \draw[greenedge] (-.35,-.15)--(1.35,-0.15);
    \draw[greenedge] (-.35,.15) to[out=180,in=180] (-.35,-.15);
    \draw[greenedge] (1.35,0.15)  to[out=0,in=0] (1.35,-0.15);
    \draw[pinkedge] (1,0)--(1,1);
     \draw[fill=white] (0,0) circle (2pt);
     \draw[fill=white] (0,1) circle (2pt);
     \draw[fill=white] (1,1) circle (2pt);
     \draw[fill=white] (1,0) circle (2pt);
\end{tikzpicture}\\
    \hline
    plain hex &&\\
    \begin{tikzpicture}[scale=1.75] % forbidden 6-cycle
        \draw[blueedge] (60:.75)--(120:.75);
        \draw[blueedge] (240:.75)--(300:.75);
        \draw[greenedge] (0:.75)--(60:.75);
        \draw[greenedge] (180:.75)--(240:.75);
        \draw[pinkedge] (300:.75)--(360:.75);
        \draw[pinkedge] (120:.75)--(180:.75);
        \foreach \x in {1,...,6}
            \draw[fill=white] ({60*\x}:.75) circle (2pt);
    \end{tikzpicture}\hfill
    &\begin{tikzpicture}[scale=1.75] % forbidden 6-cycle
        \draw[blueedge] (240:.75)--(300:.75);
        \draw[greenedge] (0:.75)--(60:.75);
        \draw[pinkedge] (120:.75)--(180:.75);
        \foreach \x in {1,...,6}
            \draw[fill=white] ({60*\x}:.75) circle (2pt);
    \end{tikzpicture}\hfill&\begin{tikzpicture}[scale=1.75] % forbidden 6-cycle
        \draw[blueedge] (60:.75)--(120:.75);
        \draw[greenedge] (180:.75)--(240:.75);
        \draw[pinkedge] (300:.75)--(360:.75);
        \foreach \x in {1,...,6}
            \draw[fill=white] ({60*\x}:.75) circle (2pt);
    \end{tikzpicture}\hfill\\
    \hline
     shark teeth &&\\
    \begin{tikzpicture}[scale=1.1]
    \draw[blueedge,double distance=15pt,line cap=round] (-.25,1)--(3.25,1);
    \draw[pinkedge] (1,1)--(.5,0);
    \draw[pinkedge] (2.5,0)--(3,1);
    \draw[greenedge] (2.5,0)--(2,1);
    \draw[greenedge] (.5,0)--(0,1);
    \draw[fill=white] (0,1) circle (2pt);
    \draw[fill=white] (1,1) circle (2pt) ;
    \draw[fill=white] (2,1) circle (2pt);
    \draw[fill=white] (3,1) circle (2pt);
    \draw[fill=white] (.5,0) circle (2pt) ;
    \draw[fill=white] (2.5,0) circle (2pt);
\end{tikzpicture}&
       \begin{tikzpicture}[scale=1.1]
    \draw[blueedge,double distance=15pt,line cap=round] (-.25,1)--(3.25,1);
    \draw[pinkedge] (2.5,0)--(3,1);
    \draw[greenedge] (.5,0)--(0,1);
    \draw[fill=white] (0,1) circle (2pt);
    \draw[fill=white] (1,1) circle (2pt) ;
    \draw[fill=white] (2,1) circle (2pt);
    \draw[fill=white] (3,1) circle (2pt);
    \draw[fill=white] (.5,0) circle (2pt) ;
    \draw[fill=white] (2.5,0) circle (2pt);
\end{tikzpicture}&
      \begin{tikzpicture}[scale=1.1]
    \draw[blueedge,double distance=15pt,line cap=round] (-.25,1)--(3.25,1);
    \draw[pinkedge] (1,1)--(.5,0);
    \draw[greenedge] (2.5,0)--(2,1);
    \draw[fill=white] (0,1) circle (2pt);
    \draw[fill=white] (1,1) circle (2pt) ;
    \draw[fill=white] (2,1) circle (2pt);
    \draw[fill=white] (3,1) circle (2pt);
    \draw[fill=white] (.5,0) circle (2pt) ;
    \draw[fill=white] (2.5,0) circle (2pt);
\end{tikzpicture}\\
\hline
 triangle union triple loop &&\\
 \begin{tikzpicture}[scale=1.75] % rainbow triangle
        \draw[blueedge,double distance=15pt,line cap=round] (-1,-.35)--(1,-.35);
        \draw[greenedge] ({330+120}:.75)--({330+240}:.75);
        \draw[pinkedge] ({330+120}:.75)--({330}:.75);
        \foreach \x in {1,2,3}
            \draw[fill=white] ({330+120*\x}:.75) circle (2pt);
              \draw[blueedge] (1,0) to[out=135,in=180] (1,0.5);
    \draw[blueedge] (1,0) to[out=45,in=0] (1,0.5);
    \draw[greenedge] (1,0) to[out=180,in=180] (1,0.75);
    \draw[greenedge] (1,0) to[out=0,in=0] (1,0.75);
    \draw[pinkedge] (1,0) to[out=180,in=180] (1,1);
    \draw[pinkedge] (1,0) to[out=0,in=0] (1,1);
    \draw[fill=white] (1,0) circle (2pt);
    \end{tikzpicture}&
     \begin{tikzpicture}[scale=1.75] % rainbow triangle
        \draw[blueedge,double distance=15pt,line cap=round] (-1,-.35)--(1,-.35);
        \draw[greenedge] ({330+120}:.75)--({330+240}:.75);
        \foreach \x in {1,2,3}
            \draw[fill=white] ({330+120*\x}:.75) circle (2pt);
    \draw[pinkedge] (1,0) to[out=180,in=180] (1,1);
    \draw[pinkedge] (1,0) to[out=0,in=0] (1,1);
    \draw[fill=white] (1,0) circle (2pt);
    \end{tikzpicture} & \begin{tikzpicture}[scale=1.75] % rainbow triangle
        \draw[blueedge,double distance=15pt,line cap=round] (-1,-.35)--(1,-.35);
        \draw[pinkedge] ({330+120}:.75)--({330}:.75);
        \foreach \x in {1,2,3}
            \draw[fill=white] ({330+120*\x}:.75) circle (2pt);
    \draw[greenedge] (1,0) to[out=180,in=180] (1,0.75);
    \draw[greenedge] (1,0) to[out=0,in=0] (1,0.75);
    \draw[fill=white] (1,0) circle (2pt);
    \end{tikzpicture} \\
    \hline
        \end{tabular}}
        \caption{Forbidden subgraphs and two different footprints covering the same set of vertices. Different line styles correspond to different edge colors. The edges drawn as ovals could have more vertices than shown.}
        \label{tab:forbiddenfootprints}
    \end{table}
In fact, these are the only obstructions to resolving.
\begin{theorem}\label{lem:poofyverboten}
    Let $\mathbf{n}\in\mathcal{N}$ and let $W$ be a basic landmark system for $K(\mathbf{n})$. Then $W$ is a resolving set if and only if the landmark graph $\mathcal{G}(W)$ avoids bad $4$-cycles, plain hex, and shark teeth.
\end{theorem}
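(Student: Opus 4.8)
The plan is to prove the two implications separately. The ``only if'' direction is immediate, since it is exactly the contrapositive of \cref{lem:BADstuff}: any landmark graph containing a bad $4$-cycle, plain hex, or shark teeth already fails to resolve. So all the work is in the ``if'' direction, which I would argue by contraposition. Assume $W$ does not resolve $K(\mathbf n)$. By the footprint criterion recalled just above, there are distinct vertices $\alpha=(a_1,a_2,a_3)$ and $\beta=(b_1,b_2,b_3)$, neither in $W$, whose footprints cover the same set of vertices, i.e.\ $W_{1,a_1}\cup W_{2,a_2}\cup W_{3,a_3}=W_{1,b_1}\cup W_{2,b_2}\cup W_{3,b_3}$. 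The goal is to extract one of the three forbidden configurations from this single set equality.

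The engine of the argument is one structural observation, which I would isolate as a claim: \emph{for every color $i$ with $a_i\neq b_i$, the edge $W_{i,a_i}$ is a stick, and its two vertices lie one in each of the two edges $W_{j,b_j}$ with $j\neq i$} (and symmetrically with the roles of $\alpha$ and $\beta$ exchanged). To see this, take any $w\in W_{i,a_i}$. Since $w$ lies in $\beta$'s covered set but not in $W_{i,b_i}$ (distinct color-$i$ edges are disjoint), it lies in $W_{j,b_j}$ for some $j\neq i$. Property (3) of \cref{def:basic} gives $|W_{i,a_i}\cap W_{j,b_j}|\leq 1$ for each of the two values $j\neq i$, whence $|W_{i,a_i}|\leq 2$; property (2) forces $|W_{i,a_i}|=2$, and the two vertices cannot lie in the same $W_{j,b_j}$, so they lie in the two different ones. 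This claim turns the vague set equality into a rigid local picture around each differing coordinate.

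With the claim in hand I would split into three cases according to the number of coordinates on which $\alpha$ and $\beta$ agree (necessarily $0$, $1$, or $2$). If they agree in two coordinates, only one color $i$ differs; the claim makes both $W_{i,a_i}$ and $W_{i,b_i}$ sticks whose endpoints lie in the two shared edges $W_{j,a_j}$ and $W_{k,a_k}$, and these four endpoints close into a tri-colored $4$-cycle whose pair of same-colored (color-$i$) opposite edges are sticks, a \textbf{bad $4$-cycle}. If they differ in all three coordinates, the claim applies to all six footprint edges and forces every one of them to be a stick; labelling the intersection vertices $x_{ij}\in W_{i,a_i}\cap W_{j,b_j}$, each edge is $\{x_{ij},x_{ik}\}$ or $\{x_{ij},x_{kj}\}$, and reading the edges off in order shows the six vertices close into a properly $3$-edge-colored $6$-cycle of sticks, a \textbf{plain hex}. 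If they agree in exactly one coordinate $i$, the shared edge $C=W_{i,a_i}=W_{i,b_i}$ may be large while the four differing edges are sticks; here the claim lets me assemble two rainbow $2$-$2$-triangles whose four termini all lie in $C$, yielding \textbf{shark teeth} once those termini are seen to be distinct.

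The routine but essential bookkeeping — checking in each case that the named vertices are genuinely distinct, so the claimed subgraph is non-degenerate — is where I expect the main difficulty to lie. In the last two cases most pairs separate automatically from the coordinate constraints $a_m\neq b_m$, but a few coincidences are excluded only by the hypothesis $\alpha,\beta\notin W$: in the shark-teeth case the two $\alpha$-side termini in $C$ would coincide precisely when that common vertex equals $\alpha$, and likewise for the $\beta$-side termini and $\beta$. Once all four termini are known to be distinct, they witness that $C$ contains at least four vertices, so $C$ is forced to be poofy and the two triangles are vertex-disjoint. Recognizing that this one-shared-coordinate case cannot avoid a poofy shared edge is the genuinely new feature relative to the isomorphic two-vertex-edge setting of \cite{FGUhl}, and it is exactly why shark teeth must be added to the list of forbidden configurations.
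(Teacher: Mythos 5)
Your proof is correct, and while it runs on the same underlying mechanism as the paper's (equality of covered vertex sets, plus the observation that an edge of $\alpha$'s footprint avoiding the corresponding edge of $\beta$'s must be absorbed by the other two edges of $\beta$'s footprint, each of which can contribute at most one vertex by property (3) of \cref{def:basic}), it is organized genuinely differently. The paper uses that observation only negatively, to rule out a differing \emph{poofy} edge, and then runs a two-level case analysis: first on the number of poofy edges the two footprints share, then on the number of covered vertices lying outside the shared edge(s), with several subcases handled by separate ad hoc arguments and the all-sticks case deferred to \cite[proof of Theorem 9]{FGUhl}. You instead promote the observation to a positive structural claim --- every edge on which $\alpha$ and $\beta$ differ is a stick with exactly one endpoint in each of the other two edges of the opposite footprint --- and split only on the number of coordinates in which $\alpha$ and $\beta$ agree. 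This collapses most of the paper's subcases (your claim shows that the geometries of the paper's Subcases 1.1, 1.3, 1.4, and 2.2 simply cannot occur when the covered sets coincide), makes the argument self-contained in the all-sticks case, and produces a clean correspondence between your three cases (two, zero, one agreements) and the three forbidden configurations (bad $4$-cycle, plain hex, shark teeth). The cost is that the non-degeneracy bookkeeping concentrates in the one-agreement case, and you identify correctly where it bites: the hypotheses $\alpha,\beta\notin W$ are exactly what separate the two $\alpha$-side termini from each other and the two $\beta$-side termini from each other, after which the four distinct termini force the shared edge to be poofy, as the definition of shark teeth requires. Both proofs are complete; yours is the tidier of the two.
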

\begin{proof}
  As seen in \cref{lem:BADstuff}, if any of the three forbidden subgraphs occur in $\mathcal{G}(W)$, then $W$ is not resolving.

  Conversely, suppose that $W$ is a basic landmark system whose graph avoids bad $4$-cycles, plain hex, and shark teeth. We will show that $W$ is a resolving set. Let $\alpha=(a_1,a_2,a_3)$ and $\beta=(b_1,b_2,b_3)$ be distinct vertices of $K(\mathbf{n})$ that are not in $W$. We will show their footprints cannot cover the same set of vertices. We consider cases based on the sizes of the hyperedges in the footprints.

  First, note that if the footprints of $\alpha$ and $\beta$ do not have any poofy edges, then by \cite[proof of Theorem 9]{FGUhl}, they do not cover the same set of vertices. For the remainder of the proof, we assume that the footprint of $\alpha$ or $\beta$ has at least one poofy edge.

    There are two main cases.  The first is when the footprint of $\alpha$ contains a poofy edge $W_{i,a_i}$ such that $W_{i,a_i}\neq W_{i,b_i}$.    
    The second is when $W_{i,a_i}=W_{i,b_i}$ for each poofy edge $W_{i,a_i}$ in the footprint of $\alpha$ (and respectively $W_{i,b_i}$ in footprint of $\beta$).  

  We first consider the case where there exists a poofy edge $W_{i,a_i}$ in the footprint of $\alpha$ such that $W_{i,a_i}\neq W_{i,b_i}$. Without loss of generality, say $i=1$.  
  If the footprints of $\alpha$ and $\beta$ cover the same set of vertices, then we must have $W_{1,a_1}\subseteq W_{2,b_2}\cup W_{3,b_3}$ since the blue edges $W_{1,a_1}$ and $W_{1,b_1}$ are disjoint. But by definition of a basic landmark system, the blue edge $W_{1,a_1}$ can intersect the green edge $W_{2,b_2}$ in at most one vertex and likewise for the pink edge $W_{3,b_3}$. This is a contradiction, since it means $|W_{1,a_1}|\leq 2$.

  Next we consider the case when for each $1\leq i\leq 3$, we have $W_{i,a_i}=W_{i,b_i}$ or $|W_{i,a_i}|=|W_{i,b_i}|=2$. We will further break into cases depending on whether the footprints of $\alpha$ and $\beta$ have one or two poofy edges in common. (Three poofy edges in common would make $\alpha=\beta$, a contradiction. No poofy edges in common would mean the edges of the footprints are all sticks, a case that was previously handled.)
  
  We encourage the reader to actively draw pictures and refer to \cref{tab:proofpics} as they are reading the following cases.

  Case 1 (footprints have exactly one poofy edge in common). By symmetry of permuting indices, it suffices to consider the case where $W_{1,a_1}=W_{1,b_1}$ and $|W_{i,a_i}|=|W_{i,b_i}|=2$ for $i=2,3$. By definition of a basic landmark system, the green and pink edges can have at most one endpoint in the blue hyperedge $W_{1,a_1}=W_{1,b_1}$ so must each have an endpoint outside of the blue hyperedge. We will consider subcases depending on how many vertices are outside the blue hyperedge.

  Subcase 1.1 (only one vertex outside the blue hyperedge).
  The footprint of $\alpha$ looks like a rainbow $2$-$2$-triangle, as in \cref{tab:proofpics} [1.1], and the landmark outside the blue hyperedge can only belong to one pink stick and one green stick. So, if the footprint of $\beta$ also covers the outside vertex then $\beta=\alpha$.

  Subcase 1.2 (exactly two vertices outside the blue hyperedge).  In this case, we show that the constraints of a basic landmark system force shark teeth, which is a contradiction.    
  Say $u$ and $v$ are the vertices outside the blue hyperedge and the footprint of $\alpha$ has pink stick $uw_1$ and green stick $vw_2$ for some $w_1,w_2\in W$  with $w_1\neq w_2$ since $\alpha$ is not a landmark (see \cref{tab:proofpics}~[1.2]). Since $\alpha\neq\beta$, the footprint of $\beta$ must have pink stick $vw_3$ and green stick $uw_4$ for some landmarks $w_3,w_4\in W$ with $w_3\neq w_4$. We claim that $\{w_1,w_2\}$ and $\{w_3,w_4\}$ are disjoint: If $w_1=w_3$ or $w_2=w_4$, then we get a contradiction to the proper $3$-edge-coloring of $\mathcal{G}(W)$. If $w_1=w_4$ or $w_2=w_3$, then we have pink and green hyperedges intersecting in more than one vertex, which contradicts $W$ a basic landmark system. Hence $w_1,w_2,w_3,w_4$ are distinct, and we have shark teeth (see \cref{tab:forbiddenfootprints}), a contradiction. 
  
  Subcase 1.3 (exactly three vertices outside the blue hyperedge). For the footprint of $\alpha$ to cover all vertices outside the blue hyperedge, there must be one stick outside (covering two outside vertices) and one stick with a vertex in and vertex out (covering the third outside vertex), as in \cref{tab:proofpics}~[1.3] (possibly with colors permuted). The same must be true for $\beta$. If we start with a drawing of $\alpha$'s footprint and then try to add in $\beta$'s footprint while maintaining a proper coloring, we are forced to have two sticks of different colors with the same pair of endpoints, which violates property (3) of a basic landmark system (\cref{def:basic}), a contradiction. 
  
  Subcase 1.4 (four vertices outside the blue hyperedge). If the footprints of $\alpha$ and $\beta$ cover the same set of vertices and there are four vertices outside the blue hyperedge, then the green and pink edges in the footprint of $\alpha$ must form a perfect matching outside the blue hyperedge (see \cref{tab:proofpics}~[1.4]), and likewise, the green and pink edges for $\beta$ form a perfect matching outside the blue hyperedge. But this is impossible unless $\alpha=\beta$ because the landmark graph is properly colored and hyperedges of different colors intersect in at most one vertex.

  Case 2 (footprints have exactly two poofy edges in common). By symmetry of permuting indices, it suffices to consider the case where $W_{1,a_1}=W_{1,b_1}$, $W_{2,a_2}=W_{2,b_2}$, and $|W_{3,a_3}|=|W_{3,b_3}|=2$. We consider subcases depending how many vertices are outside the union of the blue and green edges.

  Subcase 2.1 (no vertices outside). 
  In this case, the pink sticks $W_{3,a_3}$ and $W_{3,b_3}$ would each have one endpoint in $W_{1,a_1}=W_{1,b_1}$ and the other endpoint in $W_{2,a_2}=W_{2,b_2}$. However, this creates a bad $4$-cycle, which we are assuming we avoided.
  
  Subcase 2.2 (one or two vertices outside). The only way for the footprints of $\alpha$ and $\beta$ to both cover the vertex/vertices outside the blue and green hyperedges is if the pink sticks $W_{3,a_3}$ and $W_{3,b_3}$ coincide. But then $\alpha=\beta$, a contradiction.

  We have shown that in all cases the footprints of $\alpha$ and $\beta$ do not cover the same set of vertices. Thus $W$ is a resolving set.
\end{proof}

\begin{table}
       \centering
        {\renewcommand{\arraystretch}{1.4}\begin{tabular}{|c|c|c|}
        \hline
        %\textbf{First Main Case }
        \textbf{Case:} &\textbf{1.1} & \textbf{1.2}\\
        \hline
        &&\\
&
    \begin{tikzpicture}[scale=1.2]
    \draw[blueedge,double distance=15pt,line cap=round] (-.25,1)--(3.25,1);
        \draw[pinkedge] (1.5,0)--(3,1);
    \draw[greenedge] (1.5,0)--(2,1);
    \draw[fill=white] (0,1) circle (2pt);
    \draw[fill=white] (2,1) circle (2pt);
    \draw[fill=white] (3,1) circle (2pt);
    \draw[fill=white] (1.5,0) circle (2pt) ;
\end{tikzpicture}&
   \begin{tikzpicture}[scale=1.2]
    \draw[blueedge,double distance=15pt,line cap=round] (-.25,1)--(3.25,1);
         \draw[pinkedge] (2.5,0)--(3,1);
     \draw[greenedge] (.5,0)--(1,1);
    \draw[fill=white] (0,1) circle (2pt);
    \draw[fill=white] (1,1) circle (2pt) ;
    \draw[fill=white] (2,1) circle (2pt);
    \draw[fill=white] (3,1) circle (2pt);
    \draw[fill=white] (.5,0) circle (2pt) ;
    \draw[fill=white] (2.5,0) circle (2pt);
\end{tikzpicture}
\\
&&\\
\hline
\textbf{Case:} & \textbf{1.3} &\textbf{1.4} \\
\hline
&&\\
&     \begin{tikzpicture}[scale=1.2]
    \draw[blueedge,double distance=15pt,line cap=round] (-.25,1)--(3.25,1);
    \draw[pinkedge] (2.5,0)--(1.5,0);
    \draw[greenedge] (.5,0)--(1,1);
    \draw[fill=white] (0,1) circle (2pt);
    \draw[fill=white] (1,1) circle (2pt) ;
    \draw[fill=white] (3,1) circle (2pt);
    \draw[fill=white] (.5,0) circle (2pt) ;
    \draw[fill=white] (2.5,0) circle (2pt);
    \draw[fill=white] (1.5,0) circle (2pt);
\end{tikzpicture}& \begin{tikzpicture}[scale=1.2]
    \draw[blueedge,double distance=15pt,line cap=round] (-.25,1)--(3.25,1);
    \draw[pinkedge] (0,0)--(1,0);
    \draw[greenedge] (2,0)--(3,0);
    \draw[fill=white] (0,1) circle (2pt);
    \draw[fill=white] (1,1) circle (2pt) ;
    \draw[fill=white] (3,1) circle (2pt);
    \draw[fill=white] (0,0) circle (2pt) ;
    \draw[fill=white] (1,0) circle (2pt);
    \draw[fill=white] (2,0) circle (2pt);
        \draw[fill=white] (3,0) circle (2pt);
\end{tikzpicture}\\ 
& & \\
\hline
\end{tabular}}
 \caption{Illustrations for possible footprints of $\alpha$ (up to permuting colors) in Cases 1.1-1.4 of the proof of \cref{lem:poofyverboten}. The edges drawn as ovals could have more vertices than shown. Different line styles correspond to different edge colors.}
        \label{tab:proofpics}
\end{table}

\subsection{Forbidden configurations for triple-looped landmark systems}
The following lemma generalizes \cite[Lemma 10]{FGUhl} from triangles to $2$-$2$-triangles.

\begin{lemma}[$2$-$2$-triangle union triple loop]\label{ex:tripleloop}
Let $\mathbf{n}\in\mathcal{N}$ and let $W$ be a subset of vertices of the graph $K(\mathbf{n})$.  If the hypergraph $\mathcal{G}(W)$ contains a triple-loop and a rainbow $2$-$2$-triangle, then $W$ is not a resolving set.
\end{lemma}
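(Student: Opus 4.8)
The plan is to exhibit two distinct non-landmark vertices whose footprints cover the same set of vertices; by \cite[Remark 6]{FGUhl} this shows they are unresolved, so $W$ cannot be a resolving set. First I would fix notation by permuting colors so that the (possibly poofy) terminus edge is blue: write the blue edge as $W_{1,a_1}$, containing the termini $w_1,w_3$, and write the two sticks as the green stick $W_{2,a_2}=\{w_1,w_2\}$ and the pink stick $W_{3,a_3}=\{w_2,w_3\}$, so that $w_1\!-\!w_2\!-\!w_3$ is the rainbow path with apex $w_2$. Let $u=(c_1,c_2,c_3)$ be the triple-loop vertex, so $W_{1,c_1}=W_{2,c_2}=W_{3,c_3}=\{u\}$. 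Because $u$ is alone in each of its color classes, it shares no coordinate with any other landmark; in particular $a_1\neq c_1$, $a_2\neq c_2$, $a_3\neq c_3$, and $w_2\neq u$.

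The key step is the choice of the two vertices
\[
\alpha=(a_1,c_2,a_3)\qquad\text{and}\qquad \beta=(a_1,a_2,c_3),
\]
both of which retain the blue edge $W_{1,a_1}$ but toggle a green loop against a green stick and a pink stick against a pink loop. Computing footprints, $\alpha$ covers $W_{1,a_1}\cup W_{2,c_2}\cup W_{3,a_3}=W_{1,a_1}\cup\{u\}\cup\{w_2,w_3\}$, while $\beta$ covers $W_{1,a_1}\cup W_{2,a_2}\cup W_{3,c_3}=W_{1,a_1}\cup\{w_1,w_2\}\cup\{u\}$. Since $w_1,w_3\in W_{1,a_1}$, both footprints cover exactly $W_{1,a_1}\cup\{u,w_2\}$; the two pictures in the ``triangle union triple loop'' row of \cref{tab:forbiddenfootprints} depict precisely these two footprints.

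It then remains to check that the two vertices are legitimate and distinct. They are distinct because $c_2\neq a_2$. Neither is a landmark: if $\alpha\in W$ then its green coordinate forces $\alpha\in W_{2,c_2}=\{u\}$, whence $\alpha=u$, contradicting $a_1\neq c_1$; the same argument with the pink coordinate rules out $\beta\in W$. Thus $\alpha,\beta\in V-W$ are distinct with equal footprints, so they are unresolved and $W$ does not resolve $K(\mathbf{n})$.

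The only real obstacle is handling the poofy terminus edge uniformly. In the pure-triangle predecessor \cite[Lemma 10]{FGUhl} one could afford to route one footprint through a blue loop, but when $W_{1,a_1}$ is poofy that choice makes one footprint omit the extra vertices of the blue edge while the other retains them, so the covered sets no longer match. My construction sidesteps this by keeping the blue edge in both footprints and instead trading the green and pink loops against the corresponding sticks; the termini $w_1,w_3$ are absorbed into $W_{1,a_1}$ in either footprint, so the equality of covered sets is insensitive to how many extra vertices the blue edge carries.
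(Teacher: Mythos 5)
Your proof is correct and takes essentially the same approach as the paper: the two footprints you construct, $(a_1,c_2,a_3)$ and $(a_1,a_2,c_3)$, are exactly the pair depicted in the ``triangle union triple loop'' row of \cref{tab:forbiddenfootprints}, and keeping the (possibly poofy) blue edge in both footprints while trading the green and pink sticks against the loops at $u$ is precisely the paper's adaptation of \cite[Lemma 10]{FGUhl}. Your write-up just makes explicit the details that the paper delegates to the cited proof and the picture.
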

 \begin{proof} 
 The proof of \cite[Lemma 10]{FGUhl} still holds with $W_{1,a_1}=\{w_1,w_2\}$ replaced by $W_{1,a_1}\supseteq\{w_1,w_2\}$ to allow for rainbow $2$-$2$-triangles. See also \cref{tab:forbiddenfootprints} for a proof by picture.
 \end{proof}

The next theorem provides conditions for when a resolving set for $K(\mathbf{n})$ can be extended to a resolving set for $K(\mathbf{n}+\mathbf{1})$ by adding a triple loop.

\begin{theorem}\label{thm:poofyloopyverboten}
Let $\mathbf{n}\in\mathcal{N}$ and suppose $W$ is a basic landmark system for $K(\mathbf{n})$. Let $u=(n_1+1,n_2+1,n_3+1)$. Then the triple-looped landmark system $W\cup\{u\}$ resolves $K(\mathbf{n}+\mathbf{1})$ if and only if the landmark graph $\mathcal{G}(W)$ avoids the following forbidden subgraphs:
\begin{enumerate}
    \item bad $4$-cycles,
    \item plain hexes, and
    \item rainbow $2$-$2$-triangles\footnote{Note that avoidance of rainbow $2$-$2$-triangles also implies avoidance of shark teeth.}.
\end{enumerate}
\end{theorem}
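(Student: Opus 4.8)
The plan is to treat the two implications separately: necessity reduces cleanly to the obstruction lemmas already proved, while sufficiency is a footprint computation whose only genuine difficulty is pinpointing where a rainbow $2$-$2$-triangle is forced.

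For necessity I would argue contrapositively. If $\mathcal{G}(W)$ contains a bad $4$-cycle or a plain hex, then \cref{lem:BADstuff} produces distinct $\alpha,\beta\in V(K(\mathbf{n}))\setminus W$ with equal footprints. Since all coordinates of $\alpha,\beta$ are at most $n_i$, neither footprint covers $u$ and the participating hyperedges are unchanged in $\mathcal{G}(W\cup\{u\})$, so $\alpha,\beta$ remain unresolved in $K(\mathbf{n}+\mathbf{1})$. If instead $\mathcal{G}(W)$ contains a rainbow $2$-$2$-triangle, then $\mathcal{G}(W\cup\{u\})$ contains that triangle together with the triple loop at $u$ (\cref{rem:triple-looped-properties}), and \cref{ex:tripleloop} gives that $W\cup\{u\}$ is not resolving.

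For sufficiency, assume $\mathcal{G}(W)$ avoids all three configurations and let $\alpha,\beta$ be distinct vertices of $K(\mathbf{n}+\mathbf{1})$ outside $W\cup\{u\}$. I would organize the argument by how many coordinates of a vertex equal $n_i+1$: three such coordinates means the vertex is $u$, so a non-landmark has zero, one, or two. The crucial observation is that $u$ lies in a footprint precisely when the vertex has at least one coordinate equal to $n_i+1$ (the only way a loop $\{u\}$ can enter a footprint). Hence a vertex with no new coordinate and a vertex with a new coordinate automatically have different footprints, so such pairs are resolved for free. If both vertices are ``old'' (all coordinates $\le n_i$), their footprints relative to $W\cup\{u\}$ agree with those relative to $W$; since a graph with no rainbow $2$-$2$-triangle has no shark teeth, \cref{lem:poofyverboten} shows that $W$ already resolves $K(\mathbf{n})$, settling this case.

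The heart of the proof is the remaining case, where $\alpha$ and $\beta$ each have one or two new coordinates and both cover $u$, so I must show their footprints still differ. Deleting $u$, the ``reduced footprint'' of a vertex with one new coordinate is a union of two full-size hyperedges of distinct colors, while that of a vertex with two new coordinates is a single full-size hyperedge. The plan is a short case analysis on the number and positions of new coordinates, repeatedly using that hyperedges of different colors meet in at most one vertex whereas every non-loop hyperedge has at least two. Most subcases are immediate: a full-size edge cannot be contained in a single edge of another color, and two disjoint same-color edges cannot be absorbed into an edge of a third color, so the reduced footprints cannot coincide. The one delicate subcase---the step I expect to be the main obstacle---is when $\alpha$ and $\beta$ each have exactly one new coordinate in \emph{different} positions while agreeing on the single coordinate that is old for both. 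Assuming their reduced footprints coincide, the intersection bounds force the two non-shared edges to be sticks, and those two sticks (of the remaining two colors) then form a path whose two far endpoints lie in the shared edge of the third color---precisely a rainbow $2$-$2$-triangle, contradicting the hypothesis. The subtlety is verifying that these two far endpoints are distinct, so that a genuine rather than degenerate $2$-$2$-triangle is obtained; this subcase is the unique place where avoidance of rainbow $2$-$2$-triangles, as opposed to a bare intersection count, is needed in the ``new'' regime.
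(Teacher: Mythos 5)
Your proof is correct, and its skeleton matches the paper's: necessity via \cref{lem:BADstuff} and \cref{ex:tripleloop} (using that $u$ is adjacent to every old vertex, so old pairs must be resolved inside $W$), and sufficiency by showing two non-landmark footprints cannot cover the same vertex set. Where you genuinely diverge is the case decomposition in the hard case where both vertices lie in $V(\mathbf{n}+\mathbf{1})-V(\mathbf{n})$. The paper first splits off the situation where both footprints consist only of loops and sticks by citing the proof of Theorem 11 of the prior paper, and then organizes what remains by how many poofy edges the two footprints share and how many covered vertices fall outside the shared poofy edge. You instead classify pairs by the number and positions of coordinates equal to $n_i+1$ and compare the ``reduced footprints'' obtained by deleting $u$, using only the two axioms that non-loop hyperedges of $\mathcal{G}(W)$ have at least two vertices and that differently colored hyperedges meet in at most one vertex. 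This buys self-containment (no appeal to the earlier paper for the sticks-only case) and uniformity: your single delicate subcase---each vertex has exactly one new coordinate, in different positions, agreeing on the coordinate that is old for both---yields a rainbow $2$-$2$-triangle whether the shared third-color edge is a stick or poofy, thereby subsuming both the paper's Subcase 1.2 and the relevant portion of the deferred all-sticks analysis. Your distinctness check for the two termini also goes through: each of the two forced sticks meets the other in exactly their common endpoint, so the remaining endpoint of one stick cannot equal the remaining endpoint of the other, and the triangle is non-degenerate. The paper's route is shorter because it leans on established machinery; yours has more (but individually one-line) cases, each closed by a bare intersection count.
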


\begin{proof}
    Let $V(\mathbf{n})$ and $V(\mathbf{n}+\mathbf{1})$ be the vertex sets of the graphs $K(\mathbf{n})$ and $K(\mathbf{n}+\mathbf{1})$, respectively.
    If a bad $4$-cycle or plain hex exists in the graph of $W$, then $W$ does not resolve $K(\mathbf{n})$, and so $W\cup\{u\}$ does not resolve $K(\mathbf{n}+\mathbf{1})$. This is because the new landmark $u$ is adjacent (distance $1$) to all of the vertices in $V(\mathbf{n})$, so vertices in $V(\mathbf{n})$ can only be resolved using landmarks from $W$.
    If a rainbow $2$-$2$-triangle exists in the graph of $W$, then a $2$-$2$-triangle and triple loop exist in the graph of $W\cup\{u\}$, so by \cref{ex:tripleloop}, $W\cup\{u\}$ does not resolve $K(\mathbf{n}+\mathbf{1})$.
    
    Conversely, suppose $W$ is a basic landmark system such that $\mathcal{G}(W)$ avoids bad $4$-cycles, plain hex, and rainbow $2$-$2$-triangles (and hence also shark teeth). We will show that $W \cup \{u\}$ is a resolving set for $K(\mathbf{n}+\mathbf{1})$.  
    
    Suppose $\alpha=(a_1, a_2, a_3)$ and $\beta=(b_1, b_2, b_3)$ are distinct vertices in $K(\mathbf{n}+\mathbf{1})$ and that neither is a landmark. 
    Note that, by \cref{lem:poofyverboten}, $W$ is a resolving set for $K(\mathbf{n})$, so if $\alpha$ and $\beta$ are both in $V(\mathbf{n})$, then they are resolved by some landmark in $W$.  If one vertex, say $\alpha$, is in $V(\mathbf{n})$ and the other vertex, $\beta$, is in $V(\mathbf{n}+\mathbf{1})-V(\mathbf{n})$, then $u$ resolves $\alpha$ and $\beta$, as $u$ is in the footprint of $\beta$, but not in the footprint of $\alpha$. 
    The remaining case, where $\alpha$ and $\beta$ are both in $V(\mathbf{n}+\mathbf{1})-V(\mathbf{n})$, requires the most work. We will show the footprints of $\alpha$ and $\beta$ cannot cover the same set of vertices. 
    
    We will consider cases based on the sizes of the hyperedges in the footprints.
    First note that since $\alpha, \beta \in V(\mathbf{n}+\mathbf{1})-V(\mathbf{n})$, their footprints must each include at least one loop, and since $\alpha$ and $\beta$ are not landmarks, the footprints cannot contain more than two loops.  
    Furthermore, for cases when the footprints consist of only loops and sticks, see \cite[proof of Theorem 11]{FGUhl} for why the footprints of $\alpha$ and $\beta$ cannot cover the same set of vertices.  

    We are going to use the same main cases as in the proof of \cref{lem:poofyverboten}.  The first main case, when the footprint of $\alpha$ contains a poofy edge $W_{i,a_i}$ such that $W_{i,a_i} \neq W_{i,b_i}$ leads to the same contradiction.  The second main case has some different details.
    
  Suppose for each $1\leq i\leq 3$, we have $W_{i,a_i}=W_{i,b_i}$ or $|W_{i,a_i}|=|W_{i,b_i}| \leq 2$. As in the  proof of \cref{lem:poofyverboten}, we will further break into cases depending on whether the footprints of $\alpha$ and $\beta$ have one or two poofy edges in common.  Again, more than two poofy edges would make $\alpha = \beta$, and no poofy edges in common would mean the edges in the footprints are all sticks or loops, a case that was previously handled.

  Case 1 (footprints have exactly one poofy edge in common). By symmetry of permuting indices, it suffices to consider the case where $W_{1,a_1}=W_{1,b_1}$ and $|W_{i,a_i}|=|W_{i,b_i}|\leq 2$ for $i=2,3$. By \cref{rem:triple-looped-properties}, the green and pink edges can have at most one endpoint in the blue hyperedge $W_{1,a_1}=W_{1,b_1}$ so must each have an endpoint outside of the blue hyperedge. We will consider subcases depending on how many vertices are outside the blue hyperedge.
  Note that the possible subcases to consider are one, two, or three vertices outside the blue hyperedge, as both footprints must include a loop.
  
  Subcase 1.1 (only one vertex outside the blue hyperedge). 
  Since the footprints of $\alpha$ and $\beta$ each include a loop, the vertex outside the blue hyperedge must be $u$, the vertex that has a triple loop. It follows that $W_{2,a_2}=W_{2,b_2}= W_{3,a_3}=W_{3,b_3} =\{u\}$, so $\alpha=\beta$, which is a contradiction. 
  
  Subcase 1.2 (exactly two vertices outside the blue hyperedge). In this case, the footprints of $\alpha$ and $\beta$ each consist of a loop, a stick, and the blue hyperedge. We assumed $\alpha\neq\beta$, so up to permuting the green and pink colors, we can assume the loops are $W_{2,b_2}=W_{3,a_3}=\{u\}$ and the sticks are $W_{3,b_3}=\{v,w\}$ and $W_{2,a_2}=\{v,w'\}$, where $v$ is outside the blue hyperedge and $w$ and $w'$ are in the blue hyperedge.
  Since pink and green sticks cannot intersect in more than one vertex, we have $w\neq w'$.
  But this is a contradiction because we now have a rainbow $2$-$2$-triangle with
  green stick $W_{2,a_2}$ and pink stick $W_{3,b_3}$ whose endpoints $w$ and $w'$ are contained in the blue poofy hyperedge $W_{1,a_1}=W_{1,b_1}$.

  Subcase 1.3 (exactly three vertices outside the blue hyperedge). Since the footprints of both $\alpha$ and $\beta$ include at least one loop and need to cover all three vertices outside of blue hyperedge, we have that both footprints have a loop and a stick % a $L_1 \cup P_2$ 
  outside of the blue hyperedge. The loops must be at the same vertex, $u$, which means the sticks must be on the same two vertices. But you can't have different colored sticks between the same two vertices (since different colored hyperedges intersect in at most one vertex), so the footprints of $\alpha$ and $\beta$ must share the same stick, and in turn, the same loop. But then $\alpha=\beta$, a contradiction. 
  
  Case 2 (footprints have exactly two poofy edges in common). By symmetry of permuting indices, it suffices to consider the case where $W_{1,a_1}=W_{1,b_1}$, $W_{2,a_2}=W_{2,b_2}$ and $|W_{3,a_3}|=|W_{3,b_3}|\leq 2$. As the footprints of $\alpha$ and $\beta$ must each contain a loop, 
  we get
  $W_{3,a_3}=W_{3,b_3}=\{u\}$. Thus $\alpha=\beta$, a contradiction.
  
    Thus in all cases the footprints of $\alpha$ and $\beta$ cannot cover the same set of vertices and so $\alpha$ and $\beta$ are resolved.  Hence $W \cup \{u\}$ is a resolving set for the graph $K(\mathbf{n+1})$.
\end{proof}

%%%%%%%%%%%%%%%%%%%%%%%%%%%%%%%%%%%%%%%%%%%%%%%%%%%
%%%%%%%%%%%%%%%%%%%%%%%%%%%%%%%%%%%%%%%%%%%%%%%%%%
\section{Middle Cone Construction}\label{sec:constructions}
We now describe a construction that will allow us to build resolving sets for graphs $K(\mathbf{n})$ for $\mathbf{n}$ in the middle cone and then ``bump up'' to \textit{minimum} resolving sets for $K(\mathbf{n+1})$ by adding a triple loop.

Recall that the middle cone is the set of all $\mathbf{n}\in \mathcal{N}$ such that $3\max(n_1,n_2)\leq 2n_3\leq n_1n_2$.  
At each step in the construction, our choices make it easier to avoid forbidden configurations from \cref{lem:BADstuff}. 
We include examples after the construction to aid the reader in understanding all of the notation required.

\begin{construction}\label{construction}
   Let $\mathbf{n}\in\mathcal{N}$ with $3\max(n_1,n_2)\leq 2n_3\leq n_1n_2$ and let $f=\lfloor n_2/2\rfloor$.
  For $W\subseteq V(K(\mathbf{n}))$, let $\mathcal{M}(W)$ denote the size $3\times |W|$ matrix whose columns give the coordinates of the elements of $W$. For our construction, we will create $W$ as the union of two sets $W^L$ and $W^R$, each with $n_3$ landmarks.
  We say the elements of $W^L$ are \standout{on the left side} and the elements of $W^R$ are \standout{on the right side}.\\

  \noindent\textbf{Even Construction.}
  First we consider the case where $n_2$ is even.

  \textit{Third row.} Use the sequence $1,2,\ldots,n_3$ in the third row of matrix $\mathcal{M}(W^L)$. Do the same for the third row of the matrix $\mathcal{M}(W^R)$.\footnote{This gives us $n_3$ pink sticks in the landmark graph. The endpoints of a pink stick are on opposite sides (left/right).}

  \textit{Second row.} 
  In the middle row of $\mathcal{M}(W^L)$ repeat $1,2,\ldots,n_2/2$ (with the last repetition being only partially complete if $n_3$ is not a multiple of $n_2/2$). For the middle row of $\mathcal{M}(W^R)$, do the same pattern but add $n_2/2$ to each entry.\footnote{We have forced landmarks on opposite sides (left/right) to have different second coordinates, meaning the green hyperedges in the landmark graph only contain landmarks that are on the same side. This restricts the possible paths in the landmark graph and helps avoid forbidden configurations.} 

  \textit{First row.}
  For the first row of $\mathcal{M}(W^L)$, we use a sequence of the form \[1^{(\ell_1)},2^{(\ell_2)},\ldots,n_1^{(\ell_{n_1})},\] 
  where the parenthetical exponents $\ell_i$ denote multiplicities that sum to $n_3$, chosen as described in \cref{rem:multiplicity}.
 For the first row of $\mathcal{M}(W^R)$, we use the same multiplicities but add one (mod $n_1$) to the entries to get
  \[2^{(\ell_1)},3^{(\ell_2)},\ldots,n_1^{(\ell_{n_1-1})},1^{(\ell_{n_1})}.\]
  We refer to the submatrix of $\mathcal{M}(W^L)$ (likewise of $\mathcal{M}(W^R)$) consisting of all columns with the same first (blue) coordinate as a \standout{blue block}.
  The \standout{start of blue block $i$} is the first column number for which $i$ appears in row $1$ of $\mathcal{M}(W^L)$, i.e., column number $1+\sum_{j=1}^{i-1}\ell_j$.\\

 \noindent\textbf{Odd Construction.}
  In the case that $n_2$ is odd, we start with the even construction for $n_2-1$ (without worrying about whether the inequalities hold) and then modify by introducing landmarks that have middle (green) coordinate $n_2$. We refer to these landmarks as \standout{neutral} since they will occur in both $W^L$ and $W^R$. We place the neutral landmarks using the \standout{insert} and \standout{find and replace} operations explained in \cref{rem:moves}. Placement depends on how often the multiplicities $\ell_i$ exceed the floor $f=\frac{n_2-1}{2}$:

Case 1 ($\ell_i>f$ more than once). Insert a pair of neutrals at position $1+\sum_{j=1}^{i-1} \ell_j$ (i.e., the start of blue block $i$) whenever $\ell_i>f$.

Case 2 ($\ell_i> f$ at most once). Insert a pair of neutrals at position $1$ and then find one more landmark in $W^L$ to replace with a neutral.
\end{construction}

In the next two definitions, we explicitly describe the multiplicities and the insert and replace operations needed for our construction. 

\begin{defn}[Multiplicities]\label{rem:multiplicity}
To determine the multiplicities $\ell_1,\ell_2,\ldots,\ell_{n_1}$, we use the division algorithm to write $n_3=qn_1+r$ with $q\in\ZZ$ and $0\leq r\leq n_1-1$. 
  We use multiplicity $q+1$ a total of $r$ times and multiplicity $q$ a total of $n_1-r$ times (so the multiplicities sum to $n_3$) and distribute the multiplicities so that they alternate as much as possible.\footnote{When $q=1$, the alternating ensures poofiness of as many blue hyperedges as possible, helping to prevent creating forbidden configurations that involve sticks of multiple colors.} 
  
  In particular, if $r\leq n_1-r$, we alternate multiplicities $q+1,q$ a total of $r$ times and end with a tail of $q$'s ($n_1-2r$ of them) so the first row of $\mathcal{M}(W^L)$ looks like
  \[1^{(q+1)},2^{(q)},3^{(q+1)},\ldots,(2r-1)^{(q+1)},(2r)^{(q)},(2r+1)^{(q)},\ldots,n_1^{(q)}.\]
  If $r> n_1-r$, we alternate multiplicities $q+1,q$ a total of $n_1-r$ times and end with a tail of $(q+1)$'s ($2r-n_1$ of them) so the first row of $\mathcal{M}(W^L)$ looks like
  \[1^{(q+1)},2^{(q)},3^{(q+1)},\ldots,(2(n_1-r))^{(q)},(2(n_1-r)+1)^{(q+1)},\ldots,n_1^{(q+1)}.\]
  \end{defn}

Next, we elaborate on the ``insert'' and ``find and replace'' operations used in the odd case of \cref{construction}.

\begin{defn}[Insert and Find and Replace]\label{rem:moves}
We say \textbf{insert a pair of neutrals at position $c$} to mean: 
insert $n_2$ in row $2$, column $c$ of $\mathcal{M}(W^L)$, and
insert $n_2$ in row $2$, column $c+1$ of $\mathcal{M}(W^R)$.
Insert does not replace anything; e.g., in $\mathcal{M}(W^L)$, we just shift the row $2$ entries in columns $c$ through $n_3$ to the right one to make room for the neutral coordinate, and the entry in column $n_3$ falls out of matrix.

We say {\bf find a landmark to replace with a neutral} to mean 
choose a landmark (matrix column) of the form $(x,1,z)$ with $x\notin\{1,2,n_1\}$ and change it to $(x,n_2,z)$.\footnote{Our proof of \cref{Lem:onlypinksticks} includes explanation of why such a landmark exists.}
\end{defn}

The next three examples illustrate the cases of the construction.

\begin{example}[Even $n_2$]\label{ex5611}
We demonstrate \cref{construction} for $\mathbf{n}=(5,6,11)$.
Since $n_3=11$, we put the numbers $1$ through $11$ in the bottom row of each matrix. Since $n_2=6$, we split in half and use $1,2,3$ repeating in the middle row of $\mathcal{M}(W^L)$ and $4,5,6$ repeating in the middle row of $\mathcal{M}(W^R)$. Since $n_1=5$, we can write $n_3=qn_1+r$ with $q=2$ and $r=1$. By \cref{rem:multiplicity}, our multiplicities are $3,2,2,2,2$. Thus, in the top row of $\mathcal{M}(W^L)$, we start with $1$'s, repeating $\ell_1=q+1=3$ times; then place $2$'s, repeating $\ell_2=q=2$ times; etc. The top row of $\mathcal{M}(W^R)$ uses the same multiplicities but adds one (mod $n_1=5$) to the entries.
Hence, we have the matrices
   \[\mathcal{M}(W^L)=\left[\begin{array}{ccccccccccc}
      1 & 1 & 1 & 2 & 2 & 3 & 3 & 4 &4 &5 &5 \\
      1 & 2 & 3 &  1 & 2 & 3 & 1 & 2 & 3 &1&2 \\
      1 & 2 & 3 & 4 & 5 & 6 & 7 & 8 & 9 &10  &11
   \end{array}\right]\]
   and 
      \[\mathcal{M}(W^R)=\left[\begin{array}{ccccccccccc}
      2 & 2 & 2 & 3 & 3 & 4 & 4 & 5 & 5 & 1 & 1\\
      4 & 5 & 6 & 4 &5 & 6 & 4&5 & 6 &4&5 \\
      1 & 2 & 3 & 4 & 5 & 6 & 7 & 8 & 9&10&11
   \end{array}\right].\]
\end{example}

%%%%%%%%%%%%%%%%
\begin{example}[Odd $n_2$: Insert and replace]\label{ex5711}
We demonstrate \cref{construction} for $\mathbf{n}=(5,7,11)$. Note that $f=\lfloor 7\rfloor=3$.
We start with the matrices $\mathcal{M}(W^L)$ and $\mathcal{M}(W^R)$ from \cref{ex5611}.  Since $\ell_i \leq f$ always, we insert a pair of neutrals (with middle coordinate $7$) at position $1$ and then choose a landmark of the form $(x,1,z)$ with $x\notin\{1,2,n_1\}$ and change it to $(x,n_2,z)$.  Here we replace $(4,1,8)$ with $(4,7,8)$.  Thus we have the matrices
   \[\mathcal{M}(W^L)=\left[\begin{array}{ccccccccccc}
      1 & 1 & 1 & 2 & 2 & 3 & 3 & 4 &4 &5 &5 \\
     \raisebox{.5pt}{\textcircled{\raisebox{-.9pt} {$7$}}} & 1 & 2 & 3 &  1 & 2 & 3 &   \raisebox{.5pt}{\textcircled{\raisebox{-.9pt} {$7$}}} & 2 & 3 &1 \\
      1 & 2 & 3 & 4 & 5 & 6 & 7 & 8 & 9 &10  &11
   \end{array}\right]\]
   and 
      \[\mathcal{M}(W^R)=\left[\begin{array}{ccccccccccc}
      2 & 2 & 2 & 3 & 3 & 4 & 4 & 5 & 5 & 1 & 1\\
      4 &   \raisebox{.5pt}{\textcircled{\raisebox{-.9pt} {$7$}}} &  5 & 6 & 4 &5 & 6 & 4&5 & 6 &4 \\
      1 & 2 & 3 & 4 & 5 & 6 & 7 & 8 & 9&10&11
   \end{array}\right].\]
   We circle the neutral coordinate $7$ for ease of spotting it. 
\end{example}

\begin{example}[Odd $n_2$: Insert multiple times]\label{5717}
    We demonstrate \cref{construction} for $\mathbf{n}=(5,7,17)$.
Since $n_3=17$, we put the numbers $1$ through $17$ in the bottom row of each matrix. Since $n_2=7$, we split in half and use $1,2,3$ repeating in the middle row of $\mathcal{M}(W^L)$ and $4,5,6$ repeating in the middle row of $\mathcal{M}(W^R)$. Note that $7$ is our neutral coordinate that we use when inserting.  Since $n_1=5$, we can write $n_3=qn_1+r$ with $q=3$ and $r=2$. By \cref{rem:multiplicity}, our multiplicities are $4,3,4,3,3$. Thus, in the top row of $\mathcal{M}(W^L)$, we start with $1$'s, repeating $\ell_1=q+1=4$ times; then place $2$'s, repeating $\ell_2=q=3$ times; then place $3$'s, repeating $\ell_3=q+1=4$ times; etc.  The top row of $\mathcal{M}(W^R)$ uses the same multiplicities, but adds one (mod $n_1=5$) to the entries.
Since $\ell_1, \ell_3 > 3$, pairs of neutrals are inserted at positions $1$ and $8$ (i.e., at the start of the first and third blue blocks).  This prevents the blue edges from intersecting the green edges more than once.
Hence, we have the matrices
   \[\mathcal{M}(W^L)=\left[\begin{array}{ccccccccccccccccc}
      1 & 1 & 1 & 1  & 2 & 2 & 2  & 3 & 3 & 3 & 3 & 4 & 4 & 4  & 5 & 5 & 5  \\
       \raisebox{.5pt}{\textcircled{\raisebox{-.9pt} {$7$}}} & 1 & 2 & 3 & 1 & 2 & 3 &   \raisebox{.5pt}{\textcircled{\raisebox{-.9pt} {$7$}}} & 1 &2 & 3 & 1 & 2 & 3 & 1 & 2 & 3 \\
      1 & 2 & 3 & 4 & 5 & 6 & 7 & 8 & 9 & 10 & 11 & 12 & 13 & 14 & 15 & 16 & 17 
   \end{array}\right]\]
   and 
     \[\mathcal{M}(W^R)=\left[\begin{array}{ccccccccccccccccc}
      2 & 2 & 2 & 2  & 3 & 3 & 3  & 4 & 4 & 4 & 4 & 5 & 5 & 5  & 1 & 1 & 1  \\
      4 &   \raisebox{.5pt}{\textcircled{\raisebox{-.9pt} {$7$}}} & 5 & 6 & 4 & 5 & 6 & 4  &  \raisebox{.5pt}{\textcircled{\raisebox{-.9pt} {$7$}}}&5 & 6 & 4 & 5 & 6 & 4 & 5 & 6 \\
      1 & 2 & 3 & 4 & 5 & 6 & 7 & 8 & 9 & 10 & 11 & 12 & 13 & 14 & 15 & 16 & 17 
   \end{array}\right].\]

\end{example}

\section{Construction Produces Resolving Sets} \label{sec:proofs}
Now that we have introduced our construction, it remains to prove that all steps of our construction are possible and yield a basic landmark system that resolves the graph $K(\mathbf{n})$. We will also prove we can add a triple loop to get a minimum resolving set for $K(\mathbf{n+1})$. We start with a technical lemma about the multiplicities.  

\begin{lemma}\label{lem:f+1}
 Let $\mathbf{n}\in\mathcal{N}$ with $3\max(n_1,n_2)\leq 2n_3\leq n_1n_2$. 
  Let $f=\lfloor n_2/2\rfloor$. Let $(\ell_1,\ell_2,\ldots,\ell_{n_1})$ be the list of multiplicities as defined in \cref{rem:multiplicity}. Then we have the following bounds:
  \begin{enumerate}[label=(\alph*)]
      \item $\ell_1\geq 2$.
      \item If $n_2$ is even, then $\ell_i\leq f$ for all $1\leq i\leq n_1$.
      \item If $n_2$ is odd, then
      $\ell_i\leq f+1$ for all $1\leq i\leq n_1$, and multiplicity $f+1$ does not appear consecutively (i.e., if $\ell_i=f+1$, then $\ell_{i+1}\neq f+1$, with indices interpreted mod $n_1$). 
  \end{enumerate}    
\end{lemma}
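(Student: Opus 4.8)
The plan is to translate each of the three bounds into an elementary statement about the quotient and remainder $n_3 = qn_1 + r$ (with $0\le r\le n_1-1$) and then read it off from the explicit alternating pattern of \cref{rem:multiplicity}. The two governing inequalities will be the lower middle-cone bound $2n_3\ge 3\max(n_1,n_2)\ge 3n_1$ for part (a) and the upper bound $2n_3\le n_1n_2$ for parts (b) and (c). Throughout I use that the only two multiplicity values are $q$ and $q+1$, that $\ell_1=q+1$ whenever $r\ge 1$, and that all multiplicities equal $q$ when $r=0$.

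For part (a), I would split on $r$. If $r\ge 1$, then $\ell_1=q+1$, and since $2n_3\ge 3n_1>2n_1$ forces $q=\lfloor n_3/n_1\rfloor\ge 1$, we get $\ell_1\ge 2$. If $r=0$, then $n_3=qn_1$, so $2qn_1=2n_3\ge 3n_1$ gives $q\ge 3/2$, hence $q\ge 2$ as $q$ is an integer, and $\ell_1=q\ge 2$. For part (b), with $n_2$ even so $f=n_2/2$, the maximum multiplicity is $q+1$ when $r\ge1$ and $q$ when $r=0$; I would show this maximum is at most $f$. When $r=0$, the maximum is $q=n_3/n_1\le n_2/2=f$ by the upper bound. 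When $r\ge1$, I use the strict inequality $qn_1=n_3-r<n_3\le n_1n_2/2$ to conclude $q<n_2/2=f$, so $q\le f-1$ and $q+1\le f$.

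For part (c), with $n_2$ odd so $n_2=2f+1$, the upper bound gives $n_3\le n_1n_2/2=fn_1+n_1/2$, whence $qn_1\le n_3\le fn_1+n_1/2$ yields $q\le f+\tfrac12$ and therefore $q\le f$; thus every multiplicity is at most $q+1\le f+1$. For the non-consecutiveness claim I would observe that $f+1$ occurs only when $q=f$ and $r\ge 1$. In that regime the \emph{same} upper bound $n_3=fn_1+r\le fn_1+n_1/2$ forces $2r\le n_1$, i.e.\ $r\le n_1-r$, which is exactly the first alternating case of \cref{rem:multiplicity}. There the value $q+1=f+1$ sits only in the odd positions $1,3,\dots,2r-1$, each separated by a copy of $q=f$, and the tail consists of $q$'s, so in particular $\ell_{n_1}=f\neq f+1$; this rules out both interior repetitions and the wraparound case $\ell_{n_1}=\ell_1=f+1$ required by the ``indices mod $n_1$'' phrasing.

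I expect part (c) to be the main obstacle, precisely because it is the only place where the \emph{structure} of the alternation — not merely the maximum value — must be invoked. The crux is recognizing that the upper middle-cone inequality does double duty: it both caps $q$ at $f$ and, in the borderline case $q=f$, simultaneously forces $r\le n_1-r$, thereby guaranteeing we never enter the tail-of-$(q+1)$'s branch that would produce consecutive copies of $f+1$. Verifying the endpoint behaviour (that $\ell_{n_1}=f$ even when $n_1=2r$) to close off the modular wraparound is the one spot requiring a careful reading of the pattern rather than a pure inequality.
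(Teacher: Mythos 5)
Your proposal is correct and follows essentially the same route as the paper: both arguments reduce each bound to the quotient--remainder decomposition $n_3=qn_1+r$, use the lower middle-cone inequality for (a) and the upper one $2n_3\le n_1n_2$ to cap $q$ at $f$ (or $f-1$ when $r\ge 1$ and $n_2$ is even), and in the borderline case $q=f$ derive $2r\le n_1$ to land in the alternating-then-tail-of-$q$'s branch, which rules out consecutive (including wraparound) occurrences of $f+1$. Your explicit check that $\ell_{n_1}=f$ even when $n_1=2r$ is a slightly more careful reading of the endpoint than the paper's, but the substance is identical.
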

\begin{proof}
Part (a): Towards a contradiction assume $\ell_1 = 1$. By \cref{rem:multiplicity}, this gives that $q=1$ and $r=0$, and so $n_1=n_3$.  This is a contradiction to $3\max(n_1,n_2)\leq 2n_3$.

    Part (b): Suppose $n_2$ is even. Writing $n_3=n_1q+r$, as in \cref{rem:multiplicity}, we have 
    \begin{align*}
        2n_3-n_1n_2 & =2(n_1q+r)-n_1(2f) \\
                    & =2n_1(q-f)+2r.
    \end{align*}
    Since $2n_3-n_1n_2\leq 0$, we have $2n_1(q-f)+2r\leq 0$ and $q+r/n_1\leq f$.
    
    Let $1\leq i\leq n_1$. Recall that $0\leq r<n_1$. For the case $r=0$, observe that $q+r/n_1=q$ and $\ell_i=q$, so $\ell_i\leq f$. For the case $0<r<n_1$, observe that $q+r/n_1\leq f$ implies $q+1\leq f$. So, since $\ell_i\in\{q,q+1\}$, we have $\ell_i\leq f$. Hence $\ell_i\leq f$ for all $i$ when $n_2$ is even.
    
     Part (c): Next, suppose $n_2$ is odd. Note that
    \begin{align*}
        2n_3-n_1n_2 & =2(n_1q+r)-n_1(2f+1) \\
                    & =2n_1(q-f)+2r-n_1.
    \end{align*}
    We first show $\ell_i\leq f+1$ for $1\leq i\leq n_1$. By \cref{rem:multiplicity}, $\ell_i\in\{q,q+1\}$ for all $i$, so if we can show $q+1\leq f+1$, then we will know $\ell_i\leq f+1$ for all $i$. Towards a contradiction, suppose $q>f$.
     Then we get $2n_3-n_1n_2\geq 2n_1+2r-n_1= n_1+2r>0$,
     which contradicts our assumption $2n_3\leq n_1n_2$. Hence $q\leq f$ and $q+1\leq f+1$, so $\ell_i\leq f+1$ for all $i$.

     Finally, we show that $f+1$ does not appear consecutively in the list of multiplicities.
     If $q<f$, then $\ell_i\leq q+1\leq f$ for all $i$, so certainly $f+1$ does not appear consecutively.
     If $q=f$, then we get $2n_3-n_1n_2=2r-n_1$, so since $2n_3-n_1n_2\leq 0$, we have $2r-n_1\leq 0$, which means our multiplicities end with a tail of $q=f$'s. It follows that $(f+1)$ does not appear consecutively (including modularly).
\end{proof}

Next we show that the construction results in pink sticks and poofy blue and green edges in the corresponding landmark hypergraph.  This guarantees that the construction has no forbidden plain hex or shark teeth.

\begin{lemma}\label{Lem:onlypinksticks}
 Let $\mathbf{n}\in\mathcal{N}$ with $3\max(n_1,n_2)\leq 2n_3\leq n_1n_2$.
    If $W$ is constructed as in \cref{construction}, then in the landmark graph $\mathcal{G}(W)$ all pink hyperedges are sticks and all blue and green hyperedges are poofy.
\end{lemma}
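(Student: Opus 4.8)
The plan is to count, hyperedge by hyperedge, how many landmarks each color class contains, using the multiplicity bounds of \cref{lem:f+1} together with the defining inequalities $3\max(n_1,n_2)\le 2n_3\le n_1n_2$. I would first record that the insert and find-and-replace operations of \cref{rem:moves} touch only the second (green) row, so the first and third rows are exactly as in the even construction. Consequently each value $a\in\{1,\ldots,n_3\}$ appears once as a third coordinate on each side, and every pink hyperedge $W_{3,a}$ is a stick. For the blue hyperedges, $W_{1,i}$ contains $\ell_i$ landmarks from $W^L$ and $\ell_{i-1}$ from $W^R$ (indices mod $n_1$). Since $2n_3\ge 3n_1$ forces $q=\lfloor n_3/n_1\rfloor\ge 1$, when $q\ge 2$ each blue hyperedge has at least four vertices; and when $q=1$ the same inequality gives $r\ge n_1/2$, so the alternating pattern of \cref{rem:multiplicity} never places two $1$'s in cyclically adjacent positions, whence $\ell_i+\ell_{i-1}\ge 3$. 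Thus all blue hyperedges are poofy.

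For green hyperedges in the even case, the middle row of $\mathcal{M}(W^L)$ repeats $1,\ldots,f$ with $f=n_2/2$, so each green value occurs at least $\lfloor n_3/f\rfloor=\lfloor 2n_3/n_2\rfloor\ge 3$ times by $2n_3\ge 3n_2$; the right side is symmetric. The odd case is the heart of the argument. Here the key observation I would isolate is that inserting neutrals and truncating back to length $n_3$ merely shifts the original green entries rightward and discards a suffix, so after $m$ inserts the non-neutral green entries on each side are precisely the first $n_3-m$ terms of the periodic sequence $1,\ldots,f$ (respectively $f+1,\ldots,2f$), independent of the insert positions.

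With this in hand the counts fall out. The neutral hyperedge $W_{2,n_2}$ gains one vertex per side per insert, hence has $2m\ge 4$ vertices in Case~1 (where $m=r\ge 2$) and $2+1=3$ vertices in Case~2 (one insert contributing one vertex to each side, plus the replace contributing one more on the left); either way it is poofy. For the non-neutral values each occurs at least $\lfloor (n_3-m)/f\rfloor$ times. In Case~1 one has $q=f$ and $m=r$, so $n_3-m=fn_1$ and every non-neutral green value occurs exactly $n_1\ge 3$ times. In Case~2, $m=1$ and $2n_3\ge 3n_2$ gives $n_3\ge 3f+2$, so $\lfloor(n_3-1)/f\rfloor\ge 3$; the lone find-and-replace lowers only the count of green value $1$, which began at $\lceil(n_3-1)/f\rceil\ge 4$ and so stays at least $3$. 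The right side carries no replace and is only easier.

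It remains to justify that the Case~2 replace can be performed, i.e.\ that a landmark $(x,1,z)$ with $x\notin\{1,2,n_1\}$ exists; this is the footnoted claim. Both inequalities together force $n_1\ge 4$ in every Case~2 subcase (notably $n_1=3$ would push $q$ up to $f$ and hence into Case~1), so the middle blue blocks $3,\ldots,n_1-1$ are nonempty, and a short count---green value $1$ sits in regularly spaced columns, while the excluded blocks $1,2,n_1$ can contain only boundedly many of them---puts at least one green-$1$ column in a middle block. I expect this odd case to be the main obstacle: it is precisely where the drop from the inserts, the loss from the replace, and the existence of a legal replace target must all be controlled simultaneously, and where the upper bound $2n_3\le n_1n_2$ (via $q\le f$ in \cref{lem:f+1}) and lower bound $3n_2\le 2n_3$ are used in tandem.
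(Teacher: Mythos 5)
Your proposal is correct and follows essentially the same route as the paper: pink sticks from the identical third rows, blue poofiness from $|W_{1,i}|=\ell_{i-1}+\ell_i$ together with the non-adjacency of multiplicity $q$ when $q=1$, and green poofiness by counting occurrences of each green value after the inserts and the replace, using $3\max(n_1,n_2)\le 2n_3\le n_1n_2$ exactly as the paper does. Your observation that the inserts merely truncate the periodic green sequence to its first $n_3-m$ terms (yielding the exact count $n_3-r=fn_1$ when there are at least two inserts) slightly streamlines the paper's three-way split on $k$ into two cases, and your justification of the replace target (at least four green-$1$ landmarks lying in distinct blue blocks versus only three excluded blocks) is the same argument the paper gives.
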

\begin{proof}
First, note that all pink hyperedges are sticks, as the bottom row of each matrix in \cref{construction} consists of $1,2,\ldots, n_3$.  So, for all $1 \leq i \leq n_3$, the pink edge $W_{3,i}$ consists of two landmarks---one coming from column $i$ of $\mathcal{M}(W^L)$ and the other from column $i$ of $\mathcal{M}(W^R)$.

Next, we consider the blue hyperedges, i.e., the sets of landmarks with the same first coordinate. 
 By \cref{construction}, the sizes of the blue hyperedges are sums of consecutive multiplicities. That is, $|W_{1,i}|=\ell_{i-1}+\ell_i$ for $1\leq i\leq n_1$ (interpret indices mod $n_1$ so $\ell_0=\ell_{n_1}$). 

 Recall that each multiplicity $\ell_i$ is either $q$ or $q+1$.
If $q\geq2$, then $\ell_{i-1}+\ell_{i}\geq 2q\geq 4$ for $1\leq i\leq n_1$, so each blue hyperedge is poofy. 
If $q=1$, then by doubling both sides of $n_3=n_1+r$ and using the hypothesis $3n_1\leq 2n_3$, we get $2r\geq n_1$. Thus, by construction and \cref{rem:multiplicity}, we are in the case where there are no consecutive multiplicities $q$. That is, if $\ell_i=q=1$, then $\ell_{i-1}=\ell_{i+1}=q+1=2$. Hence all blue hyperedges are poofy in the case $q=1$ as well.

Finally, we consider the green hyperedges. If $n_2$ is even, then $3\max{(n_1,n_2)} \leq 2n_3$ implies that $n_3\geq 3(\frac{n_2}{2})$, which implies that the repetition of $1,2, \ldots, n_2/2$ in the second row of $\mathcal{M}(W^L)$ occurs at least three times. Thus, the  green edges $W_{2,i}$ are poofy for $1\leq i\leq f$.  Similarly, the green hyperedges $W_{2,i}$ are poofy for $f+1\leq i\leq n_2$.

Now consider $n_2$ is odd. 
Let $k$ be the number of times that $\ell_i >f$, i.e., the number of elements in the set 
$\{i:1\leq i\leq n_1\text{ and }\ell_i>f\}$.

Case 1 ($k \geq 3$). By construction, we have $k$ neutral landmarks on each side, so there are at least six neutral landmarks, which means the neutral green hyperedge $W_{2,n_2}$ is poofy.
Now we show that the other green hyperedges are also poofy.
Since $k \geq 3$, it means that $\ell_i >f$ (i.e., $\ell_i=f+1$ by \cref{lem:f+1}) three or more times.
So $n_3=\sum_{i=1}^{n_1}\ell_i\geq k(f+1)$ and $n_3-k\geq kf\geq 3f$.
Thus, after ignoring the $k$ neutral landmarks on the left side, we have green coordinates $1,2,\ldots,f$ repeated at least three times, which means the green hyperedges $W_{2,i}$ are poofy for $1\leq i\leq f$. Similarly, the green hyperedges $W_{2,i}$ are poofy for $f+1\leq i\leq n_2-1$.

Case 2 ($k=2$).  By construction, we have two neutral landmarks on each side, so there are four neutral landmarks, which means the green hyperedge $W_{2,n_2}$ is poofy. Now we show that the other green hyperedges are also poofy.
Since $k =2$, it means that $\ell_i >f$ (i.e., $\ell_i=f+1$) two times, so by construction, we have two neutrals and $n_3-2$ non-neutrals on each side.  
Since $n_2=2f+1$ is odd and $3n_2\leq 2n_3$, we have $3n_2+1\leq 2n_3$. Then 
\[n_3-2\geq \frac{3(2f+1)+1}{2}-2=3f.\]
Thus, after ignoring the two neutral landmarks on the left side, we have green coordinates $1,2,\ldots,f$ repeated at least three times, which means the green hyperedges $W_{2,i}$ are poofy for $1\leq i\leq f$. Similarly, the green hyperedges $W_{2,i}$ are poofy for $f+1\leq i\leq n_2-1$.

Case 3 ($0\leq k\leq 1$).
Since $0\leq k\leq 1$, we start by inserting one pair of neutrals, leaving $n_3-1$ non-neutral landmarks on each side. Using a calculation similar to Case 2, we have $n_3-1\geq 3f+1$.
Thus, after inserting the first pair of neutral landmarks, we have at least $3f+1$ landmarks  remaining on each side. It follows that when repeating green coordinates $1,2,\ldots,f$, we have at least four landmarks in the first green hyperedge, $W_{2,1}$, and at least three landmarks in the green hyperedges $W_{2,i}$ for $2\leq i\leq f$. Similarly, we have at least three landmarks in the green hyperedges $W_{2,i}$ for $f+1\leq i\leq n_2-1$.

We now explain why the replacement step is possible and why the green hyperedges are still poofy after replacement.
 Recall that the replacement step requires us to find a landmark of the form $(x,1,z)$ such that $x\notin\{1,2,n_1\}$ and replace it with the neutral landmark $(x,n_2,z)$. 
 We have already shown we have at least four landmarks in green hyperedge $W_{2,1}$, so a replacement will not impact the poofiness of the green hyperedge $W_{2,1}$. Additionally, if we can show all landmarks in green hyperedge $W_{2,1}$ belong to different blue hyperedges, then we will know that at least one of them does not have blue coordinate in $\{1,2,n_1\}$. 

 If $k=1$, then $\ell_1 = f+1$, but we inserted a neutral in the first blue block, so the periodic labeling of the green coordinates ensures all the landmarks in the first blue block have different green coordinates.  For the rest of the blue blocks, and in the case when $k=0$, we have $\ell_i \leq f$, so again the periodic labeling forces all landmarks in the same blue block to have different green coordinates. 
 
 Thus \cref{construction} yields pink sticks and poofy blue and green hyperedges.
\end{proof}

We next show our construction also satisfies the third condition of a basic landmark system.

\begin{proposition}\label{Lem:constructionBasic}
    Let $\mathbf{n}\in\mathcal{N}$ with $3\max(n_1,n_2)\leq 2n_3\leq n_1n_2$.
    If $W$ is constructed as in \cref{construction}, then $W$ is a basic landmark system.
\end{proposition}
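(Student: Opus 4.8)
The plan is to check the three defining properties of a basic landmark system from \cref{def:basic} one at a time, drawing on \cref{Lem:onlypinksticks} to dispose of the first two quickly and concentrating the work on the third. Property (2) is already in hand: \cref{Lem:onlypinksticks} shows every pink hyperedge is a stick and every blue and green hyperedge is poofy, so in particular each hyperedge has at least two vertices. For property (1) I would count colors directly from the matrices of \cref{construction}. The third row of each of $\mathcal{M}(W^L)$ and $\mathcal{M}(W^R)$ is $1,2,\ldots,n_3$, giving exactly $n_3$ pink hyperedges; the first rows run through $1,\ldots,n_1$ on the left and its cyclic shift on the right, hitting every value and giving $n_1$ blue hyperedges; and the middle rows realize $1,\ldots,f$ on the left, $f+1,\ldots,n_2-1$ on the right, together with the neutral value $n_2$ in the odd case, giving $n_2$ green hyperedges (each nonempty, since it is poofy).

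Property (3)---that hyperedges of different colors meet in at most one vertex---requires three pairwise comparisons, and the two involving pink are comparatively easy. A pink hyperedge is a stick whose endpoints are column $i$ of $\mathcal{M}(W^L)$ and column $i$ of $\mathcal{M}(W^R)$; by construction these have first coordinates differing by $1$ modulo $n_1$, hence distinct since $n_1\geq 3$, so any blue hyperedge contains at most one of them and blue meets pink in at most one vertex. For green versus pink, the left endpoint of a pink stick has green coordinate in $\{1,\ldots,f\}$ and the right endpoint in $\{f+1,\ldots,n_2-1\}$, so the only way both endpoints could share a green coordinate is if both were neutral; but the insert operation of \cref{rem:moves} places the two neutrals of a pair in different columns (positions $c$ and $c+1$), and distinct inserted pairs occupy non-consecutive block starts, so no single pink stick is doubly neutral. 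Hence green meets pink in at most one vertex.

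The heart of the argument, and the step I expect to be the main obstacle, is showing that a blue hyperedge and a green hyperedge share at most one vertex. A blue hyperedge $W_{1,i}$ is the union of a left block, whose non-neutral landmarks have green coordinates in $\{1,\ldots,f\}$, and a right block, whose non-neutral landmarks have green coordinates in $\{f+1,\ldots,n_2-1\}$; so two landmarks of $W_{1,i}$ carrying the same green coordinate must either lie on the same side or both be neutral. To rule out a same-side coincidence I would use the periodicity of the middle row together with the bounds of \cref{lem:f+1}: a block of length $\ell_i\leq f$ runs through the distinct values $1,\ldots,f$ (or their right-hand analogues), while a long block with $\ell_i=f+1$ (possible only when $n_2$ is odd) receives an inserted neutral at its start, leaving exactly $f$ distinct non-neutral entries. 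To rule out two neutrals in one blue hyperedge, I would track the first coordinates of the neutrals: the left neutral of the pair attached to block $i$ has first coordinate $i$, while its partner on the right has first coordinate $i+1$, so $W_{1,i}$ could contain two neutrals only if $\ell_{i-1}>f$ and $\ell_i>f$ simultaneously, which is precisely the consecutive occurrence of the value $f+1$ forbidden by \cref{lem:f+1}(c). The last wrinkle is the ``find and replace'' neutral used in Case 2 of \cref{construction}: since the replaced landmark has the form $(x,1,z)$ with $x\notin\{1,2,n_1\}$, the new neutral lands in a blue block distinct from the one already carrying the inserted neutral, so the bound of at most one neutral per blue hyperedge survives. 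Assembling these observations yields property (3) and completes the verification that $W$ is a basic landmark system.
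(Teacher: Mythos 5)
Your proof is correct and takes essentially the same approach as the paper's: conditions (1)--(2) via \cref{Lem:onlypinksticks}, pink sticks straddling adjacent blue blocks, side-separation of the non-neutral green classes, periodicity of the green labels together with the bounds of \cref{lem:f+1} for same-side coincidences, non-consecutiveness of multiplicity $f+1$ for pairs of neutrals, and the restriction $x\notin\{1,2,n_1\}$ for the replaced neutral. The one loose end is that your green--pink paragraph accounts only for inserted neutral pairs and not for the find-and-replace neutral $(x,n_2,z)$; the fix is the paper's observation that $x\notin\{1,2,n_1\}$ forces $z\notin\{1,2\}$ (columns $1$ and $2$ of $\mathcal{M}(W^L)$ lie in blue block $1$ because $\ell_1\geq 2$), so the pink stick in column $z$ cannot have the right-side neutral, which sits in column $2$ of $\mathcal{M}(W^R)$, as its other endpoint.
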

\begin{proof}
   Let $\mathbf{n}\in\mathcal{N}$ with $3\max(n_1,n_2)\leq 2n_3\leq n_1n_2$.
    Suppose $W$ is constructed as in \cref{construction}.
   By \cref{Lem:onlypinksticks}, $|W_{i,a}|\geq2$ for all $1\leq i\leq 3$ and $a\in[n_i]$, so conditions (1) and (2) of \cref{def:basic} are met.
   It remains to show that hyperedges of different colors intersect in at most one vertex.
 
   Case (pink-blue). By construction, the endpoints of a pink stick belong to different blue hyperedges because their first (blue) coordinates differ by one.
   
   Case (green-pink or green-blue).
    Suppose $\alpha$ and $\beta$ are distinct landmarks with the same green coordinate. We will show $\alpha$ and $\beta$ have different pink coordinates and different blue coordinates.
    
    Case 1 ($\alpha$ and $\beta$ are neutral landmarks). If $\alpha$ and $\beta$ are neutral, we are in the case where $n_2$ is odd. Let $k$ be the number of times that $\ell_i>f$.
    
    Subcase 1.1 ($0\leq k\leq 1$). There are only three neutral landmarks: $(1,n_2,1)$, $(2, n_2, 2)$\footnote{The first coordinate of this landmark must be $2$ because the landmark is on the right side and $\ell_1\geq 2$ by Part (a) of \cref{lem:f+1}.}, and $(x, n_2, z)$, where $x\notin\{1,2,n_1\}$ by construction.  Thus the three landmarks have different blue coordinates.  The restriction on $x$ forces $z\notin\{1,2\}$. Thus the three landmarks have distinct pink coordinates.
    
    Subcase 1.2 ($k\geq 2$).
    Let $s_1,s_2,\ldots,s_k$ be the starts of the blue blocks of $\mathcal{M}(W^L)$ for which $\ell_i=f+1$.
    Then we have neutral landmarks in columns $s_1,s_2,\ldots,s_k$ of $\mathcal{M}(W^L)$
    and in columns $s_1+1,s_2+1,\ldots,s_k+1$ of $\mathcal{M}(W^R)$. Let $1\leq p,q\leq k$ with $p\neq q$. Since $f+1\geq 2$,
    we know $\{s_p,s_p+1\}$ and $\{s_q,s_q+1\}$ are disjoint.  This ensures all the neutral landmarks have different pink coordinates.
    The neutral landmarks in columns $s_p$ and $s_q$ belong to different blue blocks by construction.
    The blue coordinates of the neutral landmarks in columns $s_p$ and $s_p+1$ differ by one (keeping in mind $f+1\geq 2$).  Finally, consider neutral landmarks in columns $s_p$ and $s_q+1$. By Part (c) of \cref{lem:f+1}, multiplicity $f+1$ does not appear consecutively, so the neutral landmarks have different blue coordinates.

    Case 2 ($\alpha$ and $\beta$ are non-neutral).
    The landmarks $\alpha$ and $\beta$ must be on the same side, so they certainly have different pink coordinates. If $n_2$ is even, then $\alpha$ and $\beta$ have different blue coordinates due to the periodic labeling of the green coordinates and the fact that $\ell_i\leq f$ by Part (b) of \cref{lem:f+1}. If $n_2$ is odd, then, due to the periodic labeling before insert/replace, the only way $\alpha$ and $\beta$ can have the same blue coordinate is if they belong to a blue block with $\ell_i=f+1$. However,
    after inserting neutrals, all landmarks within the same blue block have distinct green coordinates.
    
    Thus the third condition of a basic landmark system is also met. 
\end{proof}    

We now show that the construction avoids the forbidden subgraphs.

\begin{lemma}\label{Lem:NoSquares}
    Let $\mathbf{n}\in\mathcal{N}$ with $3\max(n_1,n_2)\leq 2n_3\leq n_1n_2$.
    If $W$ is constructed as in \cref{construction}, then the landmark graph $\mathcal{G}(W)$ avoids bad $4$-cycles.   
\end{lemma}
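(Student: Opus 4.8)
The plan is to recall the structure of a bad $4$-cycle and show it cannot arise from \cref{construction}. By \cref{Lem:onlypinksticks}, every pink hyperedge is a stick, so the two opposite edges of the same color that are required to be sticks in a bad $4$-cycle must be the pink pair; the blue and green edges (which may be poofy) form the other opposite pair. First I would therefore set up a bad $4$-cycle abstractly: two pink sticks $W_{3,p}$ and $W_{3,p'}$ together with a blue edge $W_{1,i}$ and a green edge $W_{2,j}$, arranged so that consecutive edges around the cycle share a vertex. Concretely this means there are four landmarks forming a closed walk blue--pink--green--pink, so one pink stick has one endpoint in the blue block $i$ and its other endpoint in the green class $j$, and likewise for the other pink stick.

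The key leverage is the construction's coordinate arithmetic. First I would exploit the fact that each pink stick $W_{3,t} = \{L_t, R_t\}$ has exactly one endpoint on the left ($L_t$, column $t$ of $\mathcal{M}(W^L)$) and one on the right ($R_t$, column $t$ of $\mathcal{M}(W^R)$), and that the blue coordinates of $L_t$ and $R_t$ differ by one mod $n_1$. Next I would use \cref{Lem:constructionBasic}: since $W$ is a basic landmark system, any two hyperedges of different colors meet in at most one vertex, which already forbids degenerate cycles. The crucial observation is the side-separation of green classes: by the second-row construction, left landmarks have green coordinates in $\{1,\dots,f\}$ and right landmarks in $\{f+1,\dots,2f\}$ (with neutrals the only shared class when $n_2$ is odd). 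So a green edge $W_{2,j}$ with $j \leq f$ lives entirely on the left, forcing both of its vertices in the $4$-cycle to be left endpoints; then the two pink sticks touching them must reach the blue block $W_{1,i}$ through their right endpoints, whose blue coordinates are shifted by $+1$. I would track these blue-coordinate shifts around the cycle and derive a contradiction—essentially the two pink sticks would need their right endpoints in the same blue block while their left endpoints are also forced into a consistent block, which the strict $+1$ shift rules out.

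I would organize the argument by which color of green edge appears: the non-neutral green classes (handled by the left/right split above) and, when $n_2$ is odd, the neutral class $W_{2,n_2}$, whose landmarks sit on both sides and are placed only at blue-block starts or at the single replaced position. For the neutral case I would invoke the disjointness and distinct-blue-coordinate facts already established in Subcases 1.1 and 1.2 of the proof of \cref{Lem:constructionBasic} to show the neutral green edge cannot close a $4$-cycle with two pink sticks against a single blue block either.

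The main obstacle I anticipate is the bookkeeping around the blue-coordinate shift combined with the mod-$n_1$ wraparound at the boundary between blue blocks $n_1$ and $1$ (where $R$ coordinates wrap from $n_1$ back to $1$), since the cyclic shift means a naive ``coordinates differ by one'' argument must be checked at the seam. A secondary subtlety is confirming that when a green edge is neutral, the limited placement of neutrals genuinely prevents two of them from landing in adjacent blue blocks in a way that would permit a cycle; this is exactly where Part (c) of \cref{lem:f+1} (no consecutive multiplicity $f+1$) does the work, so I would lean on that lemma to close the gap.
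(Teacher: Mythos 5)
Your proposal follows essentially the same route as the paper: reduce to bad $4$-cycles whose stick pair is pink via \cref{Lem:onlypinksticks}, exploit the left/right split and the $+1$ blue-coordinate shift of pink sticks to force the two green-edge vertices into one blue block (contradicting the one-vertex intersection property from \cref{Lem:constructionBasic}), and treat the neutral green class separately using the no-consecutive-$(f+1)$ fact from \cref{lem:f+1}. The only detail to tighten is the $k\leq 1$ neutral subcase, where what matters is the blue coordinates of the pink-stick \emph{partners} of the three neutral landmarks rather than of the neutrals themselves, but that is a routine finite check.
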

\begin{proof}
    Recall that in a bad $4$-cycle, opposite edges of the same color must be sticks.  \cref{Lem:onlypinksticks} tells us that all blue and green hyperedges are poofy,
    so it suffices to show
    that the construction avoids bad $4$-cycles with two pink sticks. 
    
    Suppose the landmark graph contains landmarks $w_1,w_2,w_3,w_4$ such that $w_1w_2$ and $w_3w_4$ are pink sticks and $w_2$ and $w_3$ are contained in the same blue hyperedge. We will show $w_1$ and $w_4$ are in two different green hyperedges.
    We consider cases, depending on whether $w_2$ and $w_3$ are on the same side or not.
    
    If $w_2$ and $w_3$ are on the same side (left/right) then $w_1$ and $w_4$ will be on the same side (right/left).  Moreover, $w_1$ and $w_4$ are in the same blue hyperedge as each other on that side. By \cref{Lem:constructionBasic}, blue and green hyperedges intersect in at most one vertex, so we can conclude $w_1$ and $w_4$ are in different green hyperedges. 

    If $w_2$ and $w_3$ are on opposite sides (e.g., $w_2$ on the right and $w_3$ on the left), then $w_1$ and $w_4$ will be on opposite sides.
    The only way $w_1$ and $w_4$ could be in the same green hyperedge is if they are neutral because that is the only green coordinate that is on both sides.  Recall $k$ is the number of times that $\ell_i >f$.

    Case ($0\leq k\leq 1$): In this case, our neutral landmarks are $(1,n_2,1)$, $(2, n_2, 2)$ and $(x, n_2, z)$ where $x \notin \{1,2,n_1\}$.     
    For $w\in W_{2,n_2}$, let $w^*$ denote the other endpoint of the pink stick containing $w$. We need to show $(2,n_2,2)^*$ is not in the same blue hyperedge as
    $(1,n_2,1)^*$ or $(x,n_2,z)^*$. That is, $1\notin\{2,x+1\}$. Note that $x+1\neq 1$ since $x\neq n_1$.

    Case ($k\geq 2$):  In this case, we use that multiplicity $f+1$ does not occur consecutively by \cref{lem:f+1}.
    Say $w_2\in W^R$ and $w_3\in W^L$, each with blue coordinate $i$. Then $w_1\in W^L$ with blue coordinate $i-1$ and $w_4\in W^R$ with blue coordinate $i+1$, by construction.
    If $w_1$ and $w_4$ are neutral they would come from different inserts, $w_1$ from an insert at the start of blue block $i-1$ and $w_4$ from an insert at the start of blue block $i$, so $\ell_{i-1}=\ell_i=f+1$, which is a contradiction. Analogous reasoning holds if $w_2\in W^L$ and $w_3\in W^R$.

    Thus $w_1$ and $w_4$ are in different green hyperedges in all cases. It follows that there are no bad $4$-cycles in the landmark graph $\mathcal{G}(W)$.
\end{proof}
   
Finally, the construction gives a resolving set, and we can add a triple loop.

\begin{corollary}\label{resolve}
   Let $\mathbf{n}\in\mathcal{N}$ with $3\max(n_1,n_2)\leq 2n_3\leq n_1n_2$. If $W\subseteq V(K(\mathbf{n}))$ is constructed as in \cref{construction},  then $W$ resolves the direct product graph $K(\mathbf{n})$ and the triple-looped landmark system $W \cup \{u\}$ resolves the graph $K(\mathbf{n}+\mathbf{1})$.
\end{corollary}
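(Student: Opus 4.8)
The plan is to assemble the corollary from the machinery already built up in the paper, treating it as a capstone that verifies the hypotheses of \cref{thm:poofyloopyverboten}. First I would observe that \cref{Lem:constructionBasic} establishes that $W$ (as built in \cref{construction}) is a basic landmark system for $K(\mathbf{n})$, so conditions (1)--(3) of \cref{def:basic} are in force. This is the precondition needed to invoke both \cref{lem:poofyverboten} and \cref{thm:poofyloopyverboten}, so it must be cited up front.

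Next I would check each of the three forbidden configurations in turn. \cref{Lem:NoSquares} directly shows the landmark graph $\mathcal{G}(W)$ avoids bad $4$-cycles. For plain hexes and shark teeth, I would appeal to \cref{Lem:onlypinksticks}: it shows all blue and green hyperedges are poofy while all pink hyperedges are sticks. Since a plain hex requires every one of its six edges to be a stick (including blue and green edges), and since every blue and green edge is poofy, no plain hex can occur. For shark teeth, the two rainbow $2$-$2$-triangles use green and pink sticks; because all green edges are poofy, there are no green sticks, so no rainbow $2$-$2$-triangle of the required form can exist, and hence no shark teeth. Together with \cref{Lem:NoSquares}, this shows $\mathcal{G}(W)$ avoids all three forbidden subgraphs of \cref{lem:poofyverboten}, so that theorem immediately yields that $W$ resolves $K(\mathbf{n})$.

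For the second conclusion, I would strengthen the obstruction-avoidance to match the hypotheses of \cref{thm:poofyloopyverboten}, which demands avoidance of bad $4$-cycles, plain hexes, and \emph{rainbow} $2$-$2$-triangles (a strictly stronger requirement than avoiding shark teeth). The key point is that the same fact from \cref{Lem:onlypinksticks}---that all green hyperedges are poofy, so there are no green sticks---already rules out \emph{any} rainbow $2$-$2$-triangle, since such a triangle needs two sticks of different colors and thus at least one green or blue stick. Having verified all three hypotheses of \cref{thm:poofyloopyverboten}, I conclude that $W\cup\{u\}$ resolves $K(\mathbf{n}+\mathbf{1})$.

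I do not expect a genuine obstacle here: the corollary is essentially a bookkeeping assembly of \cref{Lem:onlypinksticks}, \cref{Lem:constructionBasic}, \cref{Lem:NoSquares}, \cref{lem:poofyverboten}, and \cref{thm:poofyloopyverboten}. The only subtlety worth flagging explicitly is the distinction between the two theorems' hypotheses: \cref{lem:poofyverboten} forbids shark teeth (the weaker condition sufficient for resolving $K(\mathbf{n})$), whereas \cref{thm:poofyloopyverboten} forbids all rainbow $2$-$2$-triangles (needed to safely attach the triple loop). The elegance of the construction is that the single structural fact ``no green or blue sticks'' simultaneously dispatches plain hexes, shark teeth, and rainbow $2$-$2$-triangles, so both conclusions follow from the same observations. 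I would therefore make sure to state that the absence of green sticks is what upgrades shark-teeth-avoidance to full rainbow $2$-$2$-triangle-avoidance, as that is the one place where a careless reader might worry the stronger hypothesis fails.
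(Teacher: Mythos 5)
Your proposal is correct and follows essentially the same route as the paper: cite \cref{Lem:constructionBasic} for the basic landmark system, \cref{Lem:NoSquares} for bad $4$-cycles, and the fact from \cref{Lem:onlypinksticks} that only pink hyperedges are sticks to rule out plain hexes, shark teeth, and rainbow $2$-$2$-triangles (all of which need sticks of more than one color), then invoke \cref{lem:poofyverboten} and \cref{thm:poofyloopyverboten}. The paper's proof is just a terser version of the same assembly.
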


\begin{proof}
    By \cref{Lem:constructionBasic}, we know $W$ is a basic landmark system.
    Shark teeth, rainbow $2$-$2$-triangles, and plain hex are avoided since they require sticks of different colors, but we only have pink sticks by \cref{Lem:onlypinksticks}. 
    There are no bad $4$-cycles by \cref{Lem:NoSquares}.
    The hypotheses of \cref{lem:poofyverboten} and \cref{thm:poofyloopyverboten} are satisfied.
\end{proof}

Finally, this construction gives a minimum resolving set which gives us the metric dimension of the infinite family of graphs $K(\mathbf{n+1})$ for $\mathbf{n}$ in the middle cone.

\begin{theorem}\label{mdimthm}
 Let $\mathbf{n}\in\mathcal{N}$.
    If
    $3\max(n_1,n_2) \leq 2n_3\leq n_1n_2$, then the direct product graph $K(\mathbf{n}+\mathbf{1})=K_{n_1+1}\times K_{n_2+1}\times K_{n_3+1}$ has metric dimension $2(n_3+1)-1$.
\end{theorem}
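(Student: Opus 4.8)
The plan is to combine the lower bound from the cited companion paper with the explicit upper bound furnished by the construction. By \cite[Theorem 2]{FGUhl}, the metric dimension of $K(\mathbf{m})$ for any $\mathbf{m}\in\mathcal{N}$ is at least $2m_3-1$, where $m_3$ is the largest coordinate. Applying this with $\mathbf{m}=\mathbf{n}+\mathbf{1}$ gives $\dim\bigl(K(\mathbf{n}+\mathbf{1})\bigr)\geq 2(n_3+1)-1$. This is the cheap half of the argument and requires only that $\mathbf{n}+\mathbf{1}\in\mathcal{N}$, which holds automatically since $\mathbf{n}\in\mathcal{N}$ forces $n_i\geq 3$ so all coordinates of $\mathbf{n}+\mathbf{1}$ are at least $4$ and the ordering $n_1,n_2\leq n_3$ is preserved.

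For the matching upper bound, I would invoke the construction and its verification. Under the standing hypothesis $3\max(n_1,n_2)\leq 2n_3\leq n_1n_2$, \cref{construction} produces a set $W\subseteq V(K(\mathbf{n}))$, and \cref{resolve} guarantees that the triple-looped landmark system $W\cup\{u\}$ resolves $K(\mathbf{n}+\mathbf{1})$. The only remaining task is to count the size of $W\cup\{u\}$. By construction $W=W^L\cup W^R$ with $|W^L|=|W^R|=n_3$ in the even case, so $|W|=2n_3$ and $|W\cup\{u\}|=2n_3+1=2(n_3+1)-1$. In the odd case one must check that the insert and find-and-replace operations preserve the total count: inserting a pair of neutrals adds one landmark to each side but simultaneously pushes one entry out of each matrix (as noted in \cref{rem:moves}), so the column count of each matrix stays at $n_3$, and find-and-replace only relabels an existing landmark without changing the count. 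Hence $|W|=2n_3$ in every case, giving $|W\cup\{u\}|=2(n_3+1)-1$, a resolving set of exactly the size of the lower bound.

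The main obstacle, such as it is, is confirming that the two bounds meet with no off-by-one discrepancy, which reduces entirely to the size bookkeeping for $W$ in the odd construction. The subtlety is that ``insert'' conceptually enlarges each matrix but the convention that the last column ``falls out of matrix'' keeps $|\mathcal{M}(W^L)|=|\mathcal{M}(W^R)|=n_3$; I would state this explicitly rather than leave it implicit, since it is the one place where a careless reading might suggest $|W|>2n_3$. Once the count is pinned down, the theorem follows immediately:
\[
2(n_3+1)-1\ \leq\ \dim\bigl(K(\mathbf{n}+\mathbf{1})\bigr)\ \leq\ |W\cup\{u\}|\ =\ 2(n_3+1)-1,
\]
forcing equality. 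Everything substantive, namely that $W\cup\{u\}$ actually resolves and is a valid basic landmark system extended by a triple loop, has already been discharged by \cref{Lem:constructionBasic}, \cref{Lem:onlypinksticks}, \cref{Lem:NoSquares}, and \cref{resolve}, so this final theorem is essentially an assembly step with a short counting verification.
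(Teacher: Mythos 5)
Your proposal is correct and follows exactly the paper's argument: the lower bound $2(n_3+1)-1$ from \cite[Theorem 2]{FGUhl} combined with the upper bound from \cref{resolve}, whose resolving set $W\cup\{u\}$ has $2n_3+1$ elements. Your extra care in verifying that the insert and find-and-replace operations keep $|W|=2n_3$ is a useful explicit check that the paper leaves implicit, but it does not change the route.
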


\begin{proof}
    By \cite[Theorem 2]{FGUhl}, $\dim(K(\mathbf{n}+\mathbf{1}))\geq 2(n_3+1)-1$.
    By \cref{resolve}, $\dim(K(\mathbf{n}+\mathbf{1}))\leq 2n_3+1$.
\end{proof}
%%%%%%%%%%%%%%%%%%%%%%%%%%%%%%%%%%%%%%%%%%%%

Additionally, our resolving sets are total-dominating sets. 
A \standout{dominating set} is a set of vertices such that every vertex of the graph is either in the set or adjacent to a vertex in the set. A \standout{total-dominating set} is a set of vertices such that every vertex of the graph is adjacent to a vertex in the set.
A \standout{locating-(total)-dominating set} is a resolving set that is also a (total)-dominating set. The \standout{location-(total)-domination number} of a graph is the minimum size of a locating-(total)-dominating set. 

The location-domination number of a direct product of two complete graphs was determined in \cite{Junnila2025newoptimalresultscodes}. We next show the resolving sets from our construction are total-dominating sets, so
we have the location-total-domination number of the family of direct products of three complete graphs $K(\mathbf{n+1})$ when $\mathbf{n}$ is in the middle cone.

\begin{lemma}\label{lem:totaldomination}
    Let $\mathbf{n}\in\mathcal{N}$ with $3\max(n_1,n_2)\leq 2n_3\leq n_1n_2$. If $W\subseteq V(K(\mathbf{n}))$ is constructed as in \cref{construction},  then $W$ is a total-dominating set for the direct product graph $K(\mathbf{n})$, and the triple-looped landmark system $W \cup \{u\}$ is a total-dominating set for the graph $K(\mathbf{n}+\mathbf{1})$.
\end{lemma}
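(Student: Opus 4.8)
The plan is to verify the definition of total-domination directly: every vertex of the graph must be adjacent to some landmark. Recall that in $K(\mathbf{n})$ two vertices are adjacent precisely when they differ in all three coordinates. So I must show that for every vertex $\gamma=(c_1,c_2,c_3)$ of $K(\mathbf{n})$ there is a landmark $w=(w_1,w_2,w_3)\in W$ with $w_i\neq c_i$ for all three $i$. The key structural fact I would exploit is that, by \cref{construction}, the third (pink) coordinates of the left-side landmarks run through $1,2,\ldots,n_3$, and likewise for the right side; so the pink coordinate is highly controllable. My first step is therefore to fix $\gamma$ and argue that among the $n_3$ pink values there are plenty of landmarks whose pink coordinate avoids $c_3$, and then refine the choice to also avoid $c_1$ and $c_2$.

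The cleanest approach I would take is to count. For a fixed target vertex $\gamma$, a landmark $w$ \emph{fails} to dominate $\gamma$ if it agrees with $\gamma$ in at least one coordinate. I would bound the number of landmarks in $W$ that agree with $\gamma$ in the first coordinate (at most the size of one blue hyperedge, i.e.\ $\ell_{c_1-1}+\ell_{c_1}\le 2(f+1)$ or so), in the second coordinate (at most the size of one green hyperedge), and in the third coordinate (at most one pink stick, so at most $2$ landmarks). Summing these three bounds gives an upper bound on the number of ``bad'' landmarks; if this total is strictly less than $|W|=2n_3$, then some landmark differs from $\gamma$ in all three coordinates and hence dominates it. The middle-cone inequalities $3\max(n_1,n_2)\le 2n_3\le n_1n_2$, together with the multiplicity bounds from \cref{lem:f+1} (each $\ell_i\le f+1$ and blue hyperedges have size $\ell_{i-1}+\ell_i$), are exactly what I expect to make this count go through.

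For the triple-looped system $W\cup\{u\}$ on $K(\mathbf{n}+\mathbf{1})$ I would argue in two pieces. For the original vertices $\gamma\in V(\mathbf{n})$, the added landmark $u=(n_1+1,n_2+1,n_3+1)$ differs from $\gamma$ in all three coordinates (since each coordinate of $\gamma$ is at most $n_i<n_i+1$), so $u$ alone dominates every vertex of $V(\mathbf{n})$; in particular nothing is lost and the original domination still holds. For the new vertices $\gamma\in V(\mathbf{n}+\mathbf{1})-V(\mathbf{n})$, at least one coordinate equals $n_i+1$, so $u$ cannot be used to dominate such a $\gamma$, and I must instead find an ordinary landmark $w\in W$ differing from $\gamma$ in all three coordinates. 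Since every landmark of $W$ has all coordinates at most $n_i$, any coordinate $c_i=n_i+1$ is automatically avoided, which only \emph{helps}; the remaining constraints are on the coordinates of $\gamma$ lying in $\{1,\ldots,n_i\}$, and these are handled by the same counting argument as above (with the bad-landmark count only decreasing).

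The main obstacle I anticipate is making the counting bound tight enough in the worst corner of the middle cone, especially near the lower edge $2n_3=3\max(n_1,n_2)$ where $n_3$ is as small as allowed and the blue hyperedges can be as large as $2(f+1)$. There the sum of the three ``agreement'' bounds is closest to $2n_3$, so I would need to be careful that the inequality is strict. A secondary subtlety is the odd-$n_2$ case: the neutral green hyperedge $W_{2,n_2}$ appears on both sides, so a vertex with $c_2=n_2$ could in principle agree with more landmarks in the green coordinate than a typical green hyperedge would suggest. I would bound the neutral hyperedge size separately using the count of neutrals (at most $2k$ or the insert-plus-replace count from \cref{construction}) and confirm it does not exceed the generic green-hyperedge bound by enough to break the inequality. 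If the straight count is ever too lossy, the fallback is to argue constructively using the freedom in the pink coordinate: pick a pink stick avoiding $c_3$ whose two endpoints lie on opposite sides with differing blue coordinates, and check directly that at least one endpoint avoids both $c_1$ and $c_2$.
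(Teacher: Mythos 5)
Your proposal is correct and takes essentially the same approach as the paper: both arguments fix a vertex $\gamma$, observe that the landmarks agreeing with $\gamma$ in some coordinate are exactly those in the footprint $W_{1,c_1}\cup W_{2,c_2}\cup W_{3,c_3}$, and show this union is strictly smaller than $|W|=2n_3$ (resp.\ $|W\cup\{u\}|=2n_3+1$). The tightness worry you raise dissolves because the paper bounds the hyperedges via property (3) of a basic landmark system rather than via multiplicities --- a blue hyperedge meets each green hyperedge at most once, so $|W_{1,c_1}|\le n_2$ and likewise $|W_{2,c_2}|\le n_1$, giving $n_1+n_2+2<3\max(n_1,n_2)\le 2n_3$ immediately (using $\max(n_1,n_2)\ge 3$), with the neutral green hyperedge needing no separate treatment.
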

\begin{proof}
Let $W$ be a basic landmark system for $K(\mathbf{n})$ constructed using \cref{construction}.
Let $\alpha=(a_1,a_2,a_3)$ be a vertex in $K(\mathbf{n})$. The footprint of $\alpha$ consists of three hyperedges of different colors $W_{1,a_1}\cup W_{2,a_2}\cup W_{3,a_3}$. We want to show there is some landmark in $W$ that is not in the footprint (because that would mean it has no coordinates in common with $\alpha$ and hence is adjacent to $\alpha$ in $K(\mathbf{n})$). We will do this by showing $|W|>|W_{1,a_1}\cup W_{2,a_2}\cup W_{3,a_3}|$.

Note that $|W_{1,a_1}|\leq n_2$ and $|W_{2,a_2}|\leq n_1$ because otherwise we would get a contradiction to \cref{def:basic} part (3). Additionally, $|W_{3,a_3}|=2$ by \cref{Lem:onlypinksticks}. 
So $|W_{1,a_1}\cup W_{2,a_2}\cup W_{3,a_3}|\leq n_1+n_2+2$.
Also, $2n_3\geq 3\max(n_1,n_2)>n_1+n_2+2$. Hence, by sizes, there exists a vertex in $W$ but not in the footprint of $\alpha$. Thus $W$ is a total-dominating set for the graph $K(\mathbf{n})$.

Next, we show that $W\cup\{u\}$ is a total-dominating set for the graph $K(\mathbf{n+1})$, where $u=(n_1+1,n_2+1,n_3+1)$. As above, each $\alpha$ in $V(K(\mathbf{n}))$ is adjacent to a vertex in $W\subseteq W\cup\{u\}$.
For $\alpha\in V(K(\mathbf{n+1}))-V(K(\mathbf{n}))$, we still have $|W_{1,a_1}\cup W_{2,a_2}\cup W_{3,a_3}|\leq n_1+n_2+2$ because the only new hyperedges are loops. Note that $|W\cup\{u\}|=2n_3+1$. 
So we have
$|W \cup \{u\}|>|W_{1,a_1}\cup W_{2,a_2}\cup W_{3,a_3}|$. Thus there exists a landmark that is not in the footprint of $\alpha$. Hence $W\cup\{u\}$ is a total-dominating set for the graph $K(\mathbf{n+1})$.
\end{proof}

\begin{theorem}
     Let $\mathbf{n}\in\mathcal{N}$. If
    $3\max(n_1,n_2) \leq 2n_3\leq n_1n_2$, then for the direct product graph $K(\mathbf{n}+\mathbf{1})=K_{n_1+1}\times K_{n_2+1}\times K_{n_3+1}$, the metric dimension, location-domination, and total-location-domination numbers are all equal to $2(n_3+1)-1$.
\end{theorem}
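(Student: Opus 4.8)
The plan is to establish the three stated quantities separately, leaning on results already proved in the excerpt. The metric dimension equality is exactly \cref{mdimthm}, so that portion needs only a citation. The remaining work is to pin down the location-domination and total-location-domination numbers, and the main tool is the observation that a resolving set that also (total-)dominates gives an upper bound on the corresponding location-(total-)domination number, while the lower bounds come essentially for free.

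First I would record the general inequalities among these parameters. For any graph, every locating-total-dominating set is in particular a resolving set, so the location-total-domination number is at least $\dim(K(\mathbf{n}+\mathbf{1}))=2(n_3+1)-1$ by \cref{mdimthm}; the same reasoning gives the location-domination number is at least the metric dimension as well. Thus both domination-type parameters are bounded below by $2(n_3+1)-1$. The lower bound here is the cheap direction, and it is uniform across all three quantities because resolving is the weakest of the three conditions.

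Next I would produce a matching upper bound by exhibiting an explicit set of the right size that simultaneously resolves and total-dominates. This is precisely what \cref{resolve} together with \cref{lem:totaldomination} provides: for $W$ built via \cref{construction}, the triple-looped system $W\cup\{u\}$ resolves $K(\mathbf{n}+\mathbf{1})$ and is a total-dominating set, and it has size $2n_3+1=2(n_3+1)-1$. Hence $W\cup\{u\}$ is a locating-total-dominating set of size $2(n_3+1)-1$, giving the upper bound for the total-location-domination number. Since every total-dominating set is a dominating set, $W\cup\{u\}$ is also a locating-dominating set of the same size, giving the upper bound for the location-domination number too. Combining with the lower bounds forces all three quantities to equal $2(n_3+1)-1$.

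The only genuinely delicate point, and the one I would flag as the main obstacle, is the chain of inequalities ordering the three invariants: one must confirm that $\dim \le$ location-domination number $\le$ location-total-domination number, so that the common lower bound $2(n_3+1)-1$ applies to all three and the single witness $W\cup\{u\}$ caps all three from above. These inequalities hold because a total-dominating set is a dominating set and a locating-(total-)dominating set is by definition resolving; no new graph-theoretic input is required beyond the already-established \cref{resolve} and \cref{lem:totaldomination}. I would therefore keep the proof short: cite \cref{mdimthm} for the metric dimension, invoke \cref{lem:totaldomination} and \cref{resolve} to certify $W\cup\{u\}$ as a locating-total-dominating (hence locating-dominating) set of the minimal possible size, and note the definitional inequalities to squeeze all three invariants to the common value $2(n_3+1)-1$.
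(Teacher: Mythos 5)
Your argument is correct and follows essentially the same route as the paper: establish the chain $\dim \le \gamma^L \le \gamma^L_t$ from the definitional implications, then use \cref{mdimthm}, \cref{resolve}, and \cref{lem:totaldomination} to exhibit $W\cup\{u\}$ as a simultaneous witness of size $2(n_3+1)-1$ that squeezes all three parameters to equality. No gaps.
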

\begin{proof}
    Let $G=K(\mathbf{n+1})$, and let $\dim(G)$ denote metric dimension, $\gamma^{L}(G)$ denote the location-domination number, and $\gamma^L_{t}(G)$ denote the location-total-domination number.
    Note that every total-dominating set is dominating and every locating-dominating set is resolving, so
    \[\dim(G)\leq \gamma^L(G)\leq \gamma^L_t(G).\]
    Since $W\cup\{u\}$ is a minimum resolving set (by \cref{mdimthm}) and a total-dominating set (by \cref{lem:totaldomination}), we get equality of all three numbers for the graph $K(\mathbf{n+1})$.
\end{proof}

%%%%%%%%%%%%%%%%%%%%%%%
\section{Upper and Lower Cones}\label{sec:conclusion}
Having addressed all of the direct products of three complete graphs stemming from the middle cone, we are left with asking about the upper and lower cones.  

\subsection{Upper Cone} The techniques using basic landmark systems will not work for the upper cone.

\begin{proposition}\label{notpossible}
  Let $\mathbf{n}\in\mathcal{N}$. If $2n_3> n_1n_2$, then the direct product graph $K(\mathbf{n})$ cannot be resolved with a \textbf{basic} landmark system of $2n_3$ vertices. 
\end{proposition}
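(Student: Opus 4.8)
The plan is to argue by contradiction and establish the stronger fact that, under the hypothesis $2n_3 > n_1n_2$, no basic landmark system for $K(\mathbf{n})$ of size $2n_3$ can even exist; non-resolvability then follows immediately (and vacuously). So I would begin: suppose toward a contradiction that $W$ is a basic landmark system for $K(\mathbf{n})$ with $|W| = 2n_3$.

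The key step exploits condition (3) of \cref{def:basic} applied to the blue and green colors. Each landmark $w=(x_1,x_2,x_3)$ lies in exactly one blue hyperedge $W_{1,x_1}$ (determined by its first coordinate) and exactly one green hyperedge $W_{2,x_2}$, so $w$ determines a pair $(x_1,x_2)\in[n_1]\times[n_2]$. I would show the assignment $w\mapsto(x_1,x_2)$ is injective: if two distinct landmarks shared the same first two coordinates $(a,b)$, then both would lie in $W_{1,a}\cap W_{2,b}$, forcing the blue hyperedge $W_{1,a}$ and the green hyperedge $W_{2,b}$ to intersect in at least two vertices, contradicting condition (3). Equivalently, since the blue hyperedges partition $W$ and so do the green hyperedges, one has $|W|=\sum_{a,b}|W_{1,a}\cap W_{2,b}|$, a sum of $n_1n_2$ terms each at most $1$.

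Either framing yields $|W|\le n_1n_2$. Combined with $|W|=2n_3>n_1n_2$, this is the desired contradiction, so no basic landmark system of $2n_3$ vertices exists, and in particular none resolves $K(\mathbf{n})$. I expect no genuine obstacle here, as the heart of the argument is a single pigeonhole (or double-counting) step; the only point requiring care is the phrasing of the conclusion, since the clean statement is really the non-existence of such a system, from which non-resolvability is immediate. This is precisely what signals that the basic-landmark-system technique breaks down in the upper cone, since the natural target size $2n_3$ already exceeds the maximum possible size $n_1n_2$ of any basic landmark system.
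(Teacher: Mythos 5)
Your proposal is correct and is essentially identical to the paper's proof: both count landmarks by the intersections $W_{1,a_1}\cap W_{2,a_2}$, use condition (3) of \cref{def:basic} to bound each intersection by one landmark, and conclude $|W|\le n_1n_2<2n_3$, so no basic landmark system of size $2n_3$ exists.
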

\begin{proof}
  Suppose $W$ is a basic landmark system for $K(\mathbf{n})$ that has $2n_3$ elements. Each landmark belongs to some intersection $W_{1,a_1}\cap W_{2,a_2}$ with $1\leq a_i\leq n_i$. There are $n_1n_2$ possible intersections, but since $W_{1,a_1}$ and $W_{2,a_2}$ are hyperedges of different colors, each intersection contains at most one landmark. Hence there are at most $n_1n_2$ landmarks in $W$, and if $2n_3>n_1n_2$, there are no basic landmark systems with $2n_3$ vertices.
\end{proof}

Note that \cref{notpossible} does not imply that the metric dimension is greater than $2n_3$ in the upper cone, simply that a resolving set of size $2n_3$ cannot be achieved using a basic landmark system. 

It is still an open question as to which graphs in the upper cone can be resolved in $2n_3$ or $2n_3-1$ by loosening the restrictions (e.g., allowing hyperedges of different colors to intersect in more than one vertex or having some of the $W_{i,a}$ empty). We do know such graphs exist, as the next example shows.

\begin{example}[Upper cone]
In \cref{fig:landmarkgraph338} we have a landmark graph for a resolving set of $K(3,3,8)$ in which hyperedges of different colors intersect in more than one vertex.  There are $16$ landmarks, so $15\leq \dim(K(3,3,8))\leq 16$.
\end{example}

\begin{figure}
    \centering
\begin{tikzpicture}[scale=1]
    \draw[blueedge,double distance=60pt, line cap=round] (0,-1.5)--(0,4.5);
    \draw[blueedge,double distance=60pt, line cap=round] (6,-1.5)--(6,4.5);
    \draw[blueedge,double distance=60pt, line cap=round] (3.0,-1.5)--(3.0,4.5);
   
    \draw[greenedge] (-1.5,4.75)--(7.5,4.75);    \draw[greenedge] (-1.5,3.25)--(7.5,3.25);
    \draw[greenedge]   (-1.5,3.25) to[out=180,in=180] (-1.5,4.75);
    \draw[greenedge]   (7.5,3.25) to[out=0,in=0] (7.5,4.75);
    
    \draw[greenedge] (0,2.25)--(6,2.25);
    \draw[greenedge] (0,.75)--(6,.75);
    \draw[greenedge]   (0,.75) to[out=180,in=180] (0,2.25);
    \draw[greenedge]   (6,.75) to[out=0,in=0] (6,2.25);

     \draw[greenedge] (-1.5,-.25)--(7.5,-.25);
    \draw[greenedge] (-1.5,-1.75)--(7.5,-1.75);
    \draw[greenedge]   (-1.5,-1.75) to[out=180,in=180] (-1.5,-.25);
    \draw[greenedge]   (7.5,-1.75) to[out=0,in=0] (7.5,-.25);

    \draw[pinkedge] (3,4)--(3,2);
    \draw[pinkedge] (3,1)--(3,-1);
    
    \draw[pinkedge] (-0.75,4.)--(-0.75,-1);
    \draw[pinkedge] (0.4,4)--(0.4,2);
    \draw[pinkedge] (0.4,1)--(0.4,-1);
    
    \draw[pinkedge] (6.75,4)--(6.75,-1);
    \draw[pinkedge] (5.6,4)--(5.6,2);
      \draw[pinkedge] (5.6,1)--(5.6,-1);

    \draw[fill=black] (-0.75,4) circle (2.5pt) node[above] {};
    \draw[fill=black] (0.4,4) circle (2.5pt) node[below] {};
    \draw[fill=black] (0.4,2) circle (2.5pt) node[below] {};
     \draw[fill=black] (0.4,1) circle (2.5pt) node[below] {};
    
    \draw[fill=black] (0.4,-1) circle (2.5pt) node[below] {};
    \draw[fill=black] (-.75,-1) circle (2.5pt) node[below] {};
      
    \draw[fill=black] (3.,4) circle (2.5pt) node[above] {};
    \draw[fill=black] (3,2) circle (2.5pt) node[below] {};
    \draw[fill=black] (3,1) circle (2.5pt) node[below] {};
    \draw[fill=black] (3,-1) circle (2.5pt) node[below] {};

    \draw[fill=black] (6.75,4) circle (2.5pt) node[above] {};
     \draw[fill=black] (5.6,4) circle (2.5pt) node[above] {};
    \draw[fill=black] (5.6,2) circle (2.5pt) node[below] {};
     \draw[fill=black] (5.6,1) circle (2.5pt) node[below] {};
    \draw[fill=black] (5.6,-1) circle (2.5pt) node[above] {};
    \draw[fill=black] (6.75,-1) circle (2.5pt) node[below] {};
    
\end{tikzpicture}
    \caption{Landmark graph $\mathcal{G}(W)$ of a resolving set of size $16$ for $K(3,3,8)$.}
    \label{fig:landmarkgraph338}
\end{figure}
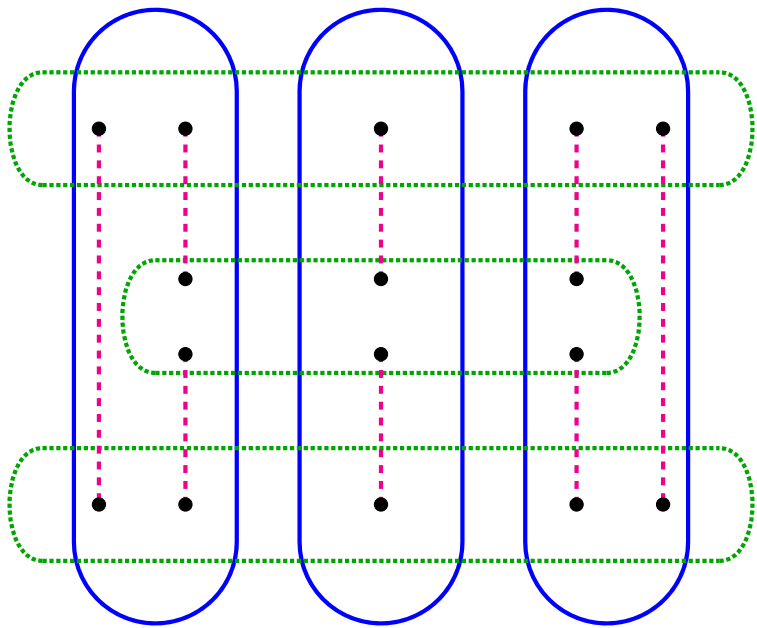

\subsection{Lower Cone} 
Finally, what about the lower cone? If $2n_3<3\max(n_1,n_2)$, then any basic landmark system of size $2n_3$ that resolves $K(\mathbf{n})$ must have sticks of multiple colors due to the pigeonhole principle. Consequently, design of a resolving basic landmark system requires more care to avoid bad $4$-cycles, plain hex, and shark teeth. 

It is an open question as to which graphs in the lower cone can be resolved in $2n_3$ or $2n_3-1$. We know some examples exist. 
For instance, despite not coming from the middle cone, the graph $K(6,7,7)$ can be resolved with $13$ landmarks by using \cref{construction} to resolve $K(5,6,6)$ and adding a triple loop. This approach is not always successful though, and the challenge is finding unified and easy to describe constructions.

\subsection*{Code} The examples mentioned in this section can be checked by computer. Our code for verifying resolving sets of $K(\mathbf{n})$ with SageMath \cite{SageMath} is available 
at \href{https://github.com/fostergreenwood/metric-dimension}{https://github.com/fostergreenwood/metric-dimension}.

%%%%%%%%%%%%%%%%%%%%%%%%%%%%%%%%%%%%%%%%%%%
%%%%%%%%%%%%%%%%%%%%%%%%%%%%%%%%%%%%%%%%%%%%%%%%%%%%%%%%%%%%%%%%%
\bibliographystyle{alphaurl}
\bibliography{metricdimension2}

\end{document}